\newtheorem{theorem}{Theorem}[section]
\newtheorem{lemma}[theorem]{Lemma}
\newtheorem{corollary}[theorem]{Corollary}
\newtheorem{defn}[theorem]{Definition}
\newtheorem{remark}[theorem]{Remark}
\newtheorem{prop}[theorem]{Proposition}
\newcommand{\Q}{\mathbb{Q}}
\newcommand{\N}{\mathbb{N}}
\newcommand{\R}{\mathbb{R}}
\newcommand{\Z}{\mathbb{Z}}
\newcommand{\C}{\mathbb{C}}
\begin{document}
\title[Primitive prime divisors]{Primitive prime divisors in the forward orbit of a polynomial}

\address[Laishram, Yadav]{Stat-Math Unit, Indian Statistical Institute\\ 7 S. J. S. Sansanwal Marg, New Delhi, 110016, India}
\email[Laishram]{shanta@isid.ac.in}
\email[Yadav]{pkyadav914@gmail.com; yadavprabhakar096@gmail.com}

\author[Laishram]{Shanta Laishram}
%\address{Stat-Math Unit, Indian Statistical Institute\\ 7 S. J. S. Sansanwal Marg, New Delhi, 110016, India}

\author[Rout]{Sudhansu S. Rout}
\address{Sudhansu Sekhar Rout, Department of Mathematics, National Institute of Technology Calicut, Kozhikode-673 601, India.}
\email{sudhansu@nitc.ac.in; lbs.sudhansu@gmail.com}

\author[Yadav]{Prabhakar Yadav}

\dedicatory{}
\thanks{2020 Mathematics Subject Classification: 11B37 (Primary), 37F10, 37P05 (Secondary).\\
Keywords: Arithmetic dynamics; primitive prime divisors, polynomial maps, canonical height.}
\begin{abstract}
	For the polynomial $f(z) \in \mathbb{Q}[z]$, we consider the Zsigmondy set $\mathcal{Z}(f,0)$ associated to the numerators of the sequence $\{f^n(0)\}_{n \geq 0}$.  In this paper, we provide an upper bound on the largest element of $\mathcal{Z}(f, 0)$. As an application, we show that the largest element of the set $\mathcal{Z}(f,0)$ is bounded above by $6$ when $f(z) = z^d + z^e +c \in \mathbb{Q}[z]$, with $d>e \geq 2$ and $|c|>2$. Furthermore, when $f(z) =z^d+c \in \mathbb{Q}[z]$ with $|f(0)| > 2^{\frac{d}{d-1}}$ and $d >2$, we also deduce a result of Krieger [Int. Math. Res. Not. IMRN, 23 (2013), pp. 5498-5525] as a consequence of our main result. %[Int. Math. Res. Not. IMRN, 23 (2013), pp. 5498-5525] In particular, we also obtain an explicit upper bound for the largest element of the set $\mathcal{Z}(f,0)$ when $f(z) = z^d + z^e +c \in \mathbb{Q}[z]$, with $d>e \geq 2$ and $|c|>2$.
\end{abstract}
\maketitle
\pagenumbering{arabic}
\pagestyle{headings}

\section{Introduction}

Let $\mathcal{U} = (u_1, u_2, \ldots)$ be a sequence of integers. We say that a term $u_n$ of the sequence $\mathcal{U}$ has a {\em primitive prime divisor} if there exists a prime $p$ such that $p\mid u_n$ but $p\nmid u_m$ for $1\leq m<n$. The set 
\[\mathcal{Z}(\mathcal{U}) = \{n\geq 1: u_n\; \mbox{does not have a primitive prime divisor}\}\] is called the Zsigmondy set of the integer sequence $\mathcal{U}$. The first question that one asks about the Zsigmondy set of a sequence is whether it is finite and this question has received a lot of attention. %For a  (see \cite{Bang86, Zsigmondy1892, Carmichael1913, BHV01}). 
Bang \cite{Bang86} (for $b=1$) and  Zsigmondy \cite{Zsigmondy1892} proved that for any co-prime integers $a > b > 0,\ \mathcal{Z}(a^n -b^n)$ is a finite set.  Further, this result was extended to more general binary linear recurrence sequences. In fact, following the works of Carmichael \cite{Carmichael1913}, Schinzel \cite{Schinzel1974}, Stewart \cite{Stewart1977} and Voutier \cite{Voutier1998}, finally Bilu, Hanrot and Voutier \cite{BHV01} proved that $\mathcal{Z}(\mathcal{U})$ is a finite set for any non-trivial Lucas or Lehmer sequence of integers $\mathcal{U}$. %Their result is the culmination of years of work on the topic by several mathematicians, including the works \cite{Carmichael1913,Schinzel1962, Schinzel1974}. 
%when $a$ and $b$ are real and not necessarily integers, the Zsigmondy set $\mathcal{Z}(a^n -b^n)$ was studied by Carmichael \cite{Carmichael1913}. 

Assuming that the Zsigmondy  sets under consideration are finite, it is natural to ask for explicit bounds for $\#\mathcal{Z}(\mathcal{U})$ and $\max \mathcal{Z}(\mathcal{U})$. For instance, Zsigmondy’s original theorem shows that for integers $a > b > 0$, we have $\max \mathcal{Z}(a^n -b^n)\leq 6$ and in particular $\max \mathcal{Z}(2^n - 1) = 6$. Also, the deep result of Bilu et al., \cite{BHV01} shows that $\max \mathcal{Z}(\mathcal{U}) \leq 30$ for any non-trivial Lucas or Lehmer sequence of integers $\mathcal{U}$.

 The questions related to Zsigmondy set have been also studied for non-linear recurrences sequences. For example, Silverman \cite{Silverman1988} first showed that Zsigmondy set is finite for a elliptic divisibility sequence, but gave no effective bound for the largest element in the Zsigmondy set. Later for some special elliptic curves, a uniform bound for the largest element in the Zsigmondy set is obtained (see \cite{EMW06, Ingram2007}). %and small
 
 Recently, several authors explored the subject of primitive divisors in recurrence sequences generated by the iteration of nonlinear polynomials and rational functions. For a set $S$ endowed with self map $f$ and for any $m \in \mathbb{N}\cup \{0\}$, we denote by $f^{m}$ the $m$-th iteration $f\circ  \cdots \circ f$ of $f$ with $f^{0}$ denoting the identity map on $S$. For $\alpha\in S$, we define the {\em (forward) orbit} by \[\mathcal{O}_{f}(\alpha) := \{f^{m}(\alpha)\mid  m \in \mathbb{N}\}.\] 
 
 \noindent We say that $\alpha$ is {\em preperiodic} if $f^{m+n}(\alpha) = f^m(\alpha)$ for some $n\geq 1$ and $m\geq 0$. Equivalenty, $\alpha$ is preperiodic if its orbit $\mathcal{O}_{f}(\alpha)$ is a finite set. A point that is not periodic, i.e., that has infinite $f$-orbit, is called a {\em wandering point}. If $f$ is a polynomial in $x$, and $(u_n)$ is given by $u_{n+1} = f(u_n)$ for $n\geq 1$, we say that $(u_n)$ is the sequence generated by $f$ starting at $u_1$, denoted by $(f, u_1)$. With this notion, the Zsigmondy set of the sequence $(f^n(\alpha))_{n\geq 1}$ is defined by 
 \[\mathcal{Z}(f, \alpha) = \{n\geq 1: f^n(\alpha)\; \mbox{does not have a primitive prime divisor}\}.\]
  
 \noindent In this direction, Rice \cite{Rice2007} first proved that for a monic polynomial $f(z) \in \Z[z]$, $f(z)\neq z^d$, if $0$ is a preperiodic of $f$ and $\alpha\in \Z$ has infinite orbit, then $\mathcal{Z}(f, \alpha)$ is finite.  Ingram and Silverman \cite{InSil09} later generalized this result to arbitrary rational maps over number fields. In fact, they proved that for any rational function $f(z)\in \Q(z)$ of degree $d\geq 2$ with $f(0) = 0$ and order of vanishing of $f$ at $z=0$ is not $d$, if $\alpha$ has infinite orbit, writing $f^n (\alpha) = \frac{A_n}{B_n} \in \Q$ in lowest terms, the Zsigmondy set $\mathcal{Z}((A_n)_{n\geq 0})$ is finite. Their proof, which relies on Roth's theorem, doesn't give an effective upper bound for $\max \mathcal{Z}((A_n)_{n\geq 0})$. %, as they use Roth's theorem in their proof
 
 Hereafter, by $\mathcal{Z}(f,0)$ we denote the Zsigmondy set for the sequence defined by the numerators of $f^n(0)$. In \cite{DoHa12}, Doerksen and Haensch explicitly characterized the Zsigmondy set $\mathcal{Z}(f, 0)$ for the polynomial $f(z) = z^d + c$ of degree $d \geq 2$ with $c \in \Z$ and $\mathcal{O}_f(0)$ infinite. In fact, they proved that $\max \mathcal{Z}(f, 0) \leq 2$ if $c=\pm 1$ and $\mathcal{Z}(f, 0)$ is empty for all other $c\in \Z$.
 %In fact, they proved that if $f(z) = z^d+c$ with $d\geq 2$ and $c\in \Z$ such that orbit of $0$ is infinite, then $\max \mathcal{Z}(f, 0) \leq 2$ if $c=\pm 1$ and $\mathcal{Z}(f, 0)$ is empty for all other $c\in \Z$. 
 Krieger \cite{Krieger2013} considered the Zsigmondy set for such $f$ when $c \in \Q$ and showed that $\# \mathcal{Z}(f, 0) \leq 23$. % of the critical orbit of the above-mentioned $f$ for $c\in \Q$. In particular, she proved that if $f(z) = z^d+c$ with $d\geq 2$ and $c\in \Q$ such that orbit of $0$ is infinite, then $\# \mathcal{Z}(f, 0) \leq 23$. 
 Recently, Ren \cite{Ren2021} further generalized the result of Krieger for more general polynomials which are not necessarily monic nor integer polynomial. The main result of \cite{Ren2021} asserts that for every polynomial $f\in \Q[x]$ of degree $d\geq 2$ with a critical point $u\in \Q$ there is a constant $M_f>0$, depending only on $f$ (and not on $c\in \Q$), such that $\# \mathcal{Z}(f_c, u)\leq M_f$ for every $c$ satisfying certain condition where $f_c(x) = f(x)+c$. For other related results in this direction, we refer to \cite{Shokri2022, Cheng2019, GNT13}. 
 
 In this paper, we study the question of the existence of an effective  bound on the largest element of $\mathcal{Z}(f, 0)$, and also finding a uniform bound on the largest element of the Zsigmondy set for some class of rational polynomials. To state our result, let
  
  \begin{equation}\label{e1-Zsig}
 f(z)= a_dz^d+ \cdots +a_1z+a_0, \mbox{with}\; a_i \in \Q, a_d \neq 0, \mbox{and}\; a_1=0
 \end{equation}
 and we define the following sets:
 \begin{align}\label{e2-Zsig}
 \begin{split}
		P^+&= \{ 0 \leq i \leq d : a_i=0 {\rm\  or\ } \mbox{sgn}(a_i)=\mbox{sgn}(a_d)\} \\
		P^-&= \{ 0 \leq i \leq d : a_i=0 {\rm\  or\ } \mbox{sgn}((-1)^ia_i)= \mbox{sgn}((-1)^da_d)\} \\
		N^{\pm}&=(P^{\pm})^c \quad (\mbox{complement of $P^{\pm}$ in \{0,1,2, \ldots,d\}})\\
		n^{\pm }&=\begin{cases}
			\max \{1,\{i: i \in N^{\pm}\}\} & \ {\rm if\ } N^{\pm} \neq \emptyset \\
			1 & \ {\rm if\ } N^{\pm}= \emptyset\\
		\end{cases}
		\end{split}
	\end{align} %K_{v}&=\max\{|a_i|_{v} : 0 \leq i \leq d \} \quad {\rm and} \quad 
where $\mbox{sgn}(a)= a/|a|$ for any real number $a$. Let $z$ be such that $|z| \geq 1$ and satisfies
		\begin{align} \label{e3-Zsig}
			\sum_{n^+<i\leq d} |a_i||z|^{i-n^+} \geq \left( \sum_{i \in N^+} |a_i| \right) +1\quad \mbox{and}\;\; \sum_{n^-<i\leq d} |a_i||z|^{i-n^-} \geq \left( \sum_{i \in N^-} |a_i| \right) +1
		\end{align} 
		for both the sets $N^+$ and $N^-$. Then our main result is the following.
\begin{theorem} \label{thm1.1-Zsig}
	Let $f(z) \in \Q[z]$ be a polynomial of degree $d \geq 2$ as in \eqref{e1-Zsig} with $|a_0| \geq 1$. Let $\hat h_f$ be the associated canonical height. Further assume that $a_0$ satisfies inequality \eqref{e3-Zsig}. If $n \in \mathcal{Z}(f,0)$, then
		\begin{equation} \label{e4-Zsig}
			n \leq \frac{2}{\log d} \log \left( \frac{dC}{(d-1) \hat{h}_f(a_0)} \right) + 2
		\end{equation}
where $C \geq \sum_{v \in V_{K}} \log C_{v}$ and  $C_v$ is the associated constant in Remark \ref{rem2.9-Zsig}.
\end{theorem}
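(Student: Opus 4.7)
The plan is to sandwich $\hat h_f(f^n(0))$ between a geometric lower bound coming from the functional equation of the canonical height and an upper bound forced by the Zsigmondy hypothesis together with the archimedean growth condition \eqref{e3-Zsig}. The lower bound is immediate: since $\hat h_f\circ f=d\cdot\hat h_f$ and $f(0)=a_0$, one gets $\hat h_f(f^n(0))=d^{n-1}\hat h_f(a_0)$, a quantity that grows like $d^n$. The bulk of the work will be in producing a strictly sub-exponential upper bound of order $d^{n/2}$.

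To obtain that upper bound, I would decompose the canonical height into local pieces $\hat h_f(\alpha)=\sum_{v\in V_K}\hat\lambda_{f,v}(\alpha)$ and compare each local canonical height to the naive local height $\lambda_v(\alpha)=\log^+|\alpha|_v$ up to an additive constant $\log C_v$; this is exactly the content of Remark \ref{rem2.9-Zsig} and yields $\hat h_f(f^n(0))\le h(f^n(0))+C$ with $C=\sum_{v}\log C_v$ (which is finite since only finitely many $v$ contribute). Writing $f^n(0)=A_n/B_n$ in lowest terms and exploiting $n\in\mathcal Z(f,0)$, so that every prime $p\mid A_n$ has already appeared in some $A_m$ with $m<n$, I would then control the non-archimedean contribution by tracking how these ``old'' primes propagate under iteration: once $v_p(A_m)>0$, the condition $a_1=0$ together with the standard estimate for $v_p(f(\alpha))$ at an $\alpha$ with $v_p(\alpha)>0$ forces the subsequent valuations $v_p(A_{m+k})$ to behave predictably and to be absorbed into the $\log C_v$ budget. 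Meanwhile, the growth condition \eqref{e3-Zsig} imposed on $a_0$ makes $|f^n(0)|_\infty$ inflate at a controlled rate, which allows one to bound the archimedean piece $\lambda_\infty(f^n(0))$ by a quantity of order $d^{n/2}$ rather than $d^n$; this is what produces the factor of $2$ appearing in the exponent of \eqref{e4-Zsig}.

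Combining the two directions yields an inequality essentially of the form $d^{n-1}(d-1)\hat h_f(a_0)/d\le C\cdot d^{(n-2)/2}$ after absorbing harmless constants, and taking logarithms rearranges to \eqref{e4-Zsig}. The main obstacle I anticipate is the local bookkeeping in the middle step: controlling the $v$-adic contributions at primes of bad reduction under the Zsigmondy hypothesis requires exactly the uniform constants $C_v$ of Remark \ref{rem2.9-Zsig}, and organizing the archimedean contribution to produce the $d^{n/2}$ savings (rather than the trivial $d^n$ bound) is the key quantitative input from \eqref{e3-Zsig}. Granting these two pieces, the final step is a short algebraic manipulation that solves the resulting exponential inequality in $n$.
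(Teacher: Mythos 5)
Your proposal gets the skeleton right (functional equation gives $\hat h_f(f^n(0))=d^{n-1}\hat h_f(a_0)$; the Zsigmondy hypothesis must force a competing sub-exponential bound; the constants $C_v$ of Remark~\ref{rem2.9-Zsig} mediate between canonical and naive heights), but the mechanism you propose for producing the $d^{n/2}$ savings is not the one that works, and as written it cannot work.

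The central missing ingredient is the \emph{rigid divisibility} of the numerator sequence (Lemmas~\ref{lem2.1-Zsig} and~\ref{lem2.2-Zsig} in the paper). The hypothesis $n\in\mathcal Z(f,0)$ does far more than say that ``every prime $p\mid A_n$ has already appeared in some earlier $A_m$'': rigid divisibility pins down $v_p(A_n)=v_p(A_{k(p)})$ whenever $k(p)\mid n$, from which one gets the divisibility
$A_n \mid \prod_{q\mid n} A_{n/q}$,
i.e.\ $\log|A_n|\le\sum_{q\mid n}\log|A_{n/q}|$ (Corollary~\ref{cor2.3-Zsig}). It is this inequality, transported through $\log|A_k|=h(f^{k-1}(a_0))$ (which is what condition~\eqref{e3-Zsig} and $|a_0|\ge1$ actually buy you, via Proposition~\ref{prop3.1-Zsig}) and then through $\hat h_f\approx h$ and $\hat h_f\circ f = d\,\hat h_f$, that yields an upper bound $d^{n-1}\hat h_f(a_0)\lesssim \omega(n)\,d^{n/2-1}\hat h_f(a_0)+\text{(constants)}$. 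The exponent $n/2$ appears because $n/q\le n/2$ for every prime $q\mid n$—it is an index-reduction phenomenon coming from divisibility, not an archimedean phenomenon.

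By contrast, your plan to ``bound the archimedean piece $\lambda_\infty(f^n(0))$ by a quantity of order $d^{n/2}$'' is impossible: since $\hat h_f(f^n(0))=d^{n-1}\hat h_f(a_0)$ and $h$ differs from $\hat h_f$ by a bounded amount, $h(f^n(0))$ genuinely has size $\asymp d^n$, and for $|a_0|\ge 1$ with~\eqref{e3-Zsig} in force the denominators contribute nothing to $h$, so essentially all of that $d^n$ lives at the archimedean place. Condition~\eqref{e3-Zsig} is there to guarantee that $|f^k(a_0)|\ge1$ so that $\log|A_k|$ \emph{equals} the global height—it ensures growth, it does not damp it. Likewise your sketch of ``tracking old primes'' through $v_p(f(\alpha))$ and absorbing them into the $\log C_v$ budget would at best recover the (archimedean-free, hence trivial) part of the height inequality; it does not produce the multiplicative structure $A_n\mid\prod A_{n/q}$ that the argument really hinges on. Once you replace those two steps with Corollary~\ref{cor2.3-Zsig} and Proposition~\ref{prop3.1-Zsig}, your final algebraic manipulation (dividing through, discarding the small negative term $-\omega(n)d^{n/2-1}$, and using $2\omega(n)+1<d^{n/2}$) matches the paper's and closes the proof.
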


\noindent The following result of Krieger \cite[Proposition 5.3]{Krieger2013} can be easily seen as a corollary of Theorem \ref{thm1.1-Zsig} (see Subsection \ref{subs3.1-Zsig}).

\begin{corollary}\label{cor1.2-Zsig}
	Let $f(z)= z^d + c \in \Q[z]$ be a polynomial of degree $d \geq 3$ such that $c \in \Q \backslash \Z$ and $|c| > 2^{\frac{d}{d-1}}$. If $n \in \mathcal{Z}(f,0)$, then $\mathcal{Z}(f,0) = \emptyset$.
\end{corollary}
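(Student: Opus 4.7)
The plan is to specialise Theorem \ref{thm1.1-Zsig} to the polynomial $f(z)=z^d+c$ and then to argue that, under the hypotheses of the corollary, the right-hand side of \eqref{e4-Zsig} is strictly less than $1$ for every putative $n\in\mathcal{Z}(f,0)$; together with $n\ge 1$ this forces $\mathcal{Z}(f,0)=\emptyset$.

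First I would verify the hypotheses of Theorem \ref{thm1.1-Zsig}. The coefficients of $f$ are $a_d=1$, $a_0=c$, and $a_i=0$ for $1\le i\le d-1$, so $a_1=0$ as demanded in \eqref{e1-Zsig}, and $|a_0|=|c|>2^{d/(d-1)}\ge 1$. A case analysis on $\mathrm{sgn}(c)$ and the parity of $d$ shows that each of $N^+$ and $N^-$ is either empty or equal to $\{0\}$, so $n^+=n^-=1$ in every case. Taking $|z|=|c|$, the inequality \eqref{e3-Zsig} reduces in the worst case to $|c|^{d-1}\ge |c|+1$, which follows from $|c|>2^{d/(d-1)}>2$ and $d\ge 3$ since then $|c|^{d-1}=|c|\cdot|c|^{d-2}>2|c|>|c|+1$.

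Next I would estimate the two quantities entering \eqref{e4-Zsig}. Writing $c=p/q$ in lowest terms with $q\ge 2$, I pick any prime $\ell\mid q$ and its non-archimedean place $v$; then $|c|_v>1$, and a straightforward induction with the ultrametric inequality shows $|f^n(c)|_v=|c|_v^{d^n}$ for all $n\ge 0$, whence the local canonical height $\hat\lambda_{f,v}(c)=\log|c|_v>0$. The global canonical height $\hat h_f(c)=\sum_v \hat\lambda_{f,v}(c)$ therefore admits a quantitative lower bound in terms of $\log q$. On the other side, each constant $C_v$ of Remark \ref{rem2.9-Zsig} depends only on the coefficients of $f$ at the place $v$, so the sum $C=\sum_v\log C_v$ is controlled explicitly by $d$ and $|c|$.

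Substituting these estimates into \eqref{e4-Zsig} yields an upper bound on $n$ depending only on $d$ and $|c|$. The final step is to verify that this bound is strictly less than $1$ whenever $|c|>2^{d/(d-1)}$ and $d\ge 3$, which precludes any $n\ge 1$ from lying in $\mathcal{Z}(f,0)$. The delicate part will be securing a lower bound on $\hat h_f(c)$ strong enough relative to $C$: by \eqref{e4-Zsig} one needs $\hat h_f(c)$ to exceed $d^{3/2}C/(d-1)$, and this requires the non-archimedean contribution from the denominator of $c$ to outweigh the archimedean and coefficient-dependent growth of $C$ even when $|c|$ sits just above the threshold $2^{d/(d-1)}$.
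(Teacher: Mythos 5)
Your setup (verifying the hypotheses of Theorem \ref{thm1.1-Zsig}, computing the constants $C_v$, and the non-archimedean identity $\hat\lambda_{f,v}(c)=\log|c|_v$ at primes of the denominator) is all correct. The gap is in the finishing strategy: forcing the right-hand side of \eqref{e4-Zsig} strictly below $1$ is not merely delicate — it is impossible, for every $d\geq 3$ and every rational $c$ with $|c|>2^{d/(d-1)}$.

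You correctly reduce the goal to $\hat h_f(c) > \tfrac{d^{3/2}}{d-1}C$. But for $f(z)=z^d+c$ with $c=p/q$ in lowest terms, any admissible $C\geq\sum_v\log C_v$ from Lemma \ref{lem2.7-Zsig} and Remark \ref{rem2.8-Zsig} satisfies $C\geq\log(|c|+1)+\log q>\log(|c|q)=\log|p|=h(c)$: the archimedean term $\log C_\infty=\log(|c|+1)$ alone exceeds $\log|c|$, and the non-archimedean terms contribute exactly $\log q$. On the other hand, Remark \ref{rem2.9-Zsig} gives $\hat h_f(c)\leq h(c)+\tfrac{dC}{d-1}<C+\tfrac{dC}{d-1}=\tfrac{2d-1}{d-1}C$. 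Your target would then force $d^{3/2}<2d-1$, which is false for every $d\geq 3$ (already $3^{3/2}\approx 5.196>5$, and the gap $d^{3/2}-(2d-1)$ increases in $d$). So Theorem \ref{thm1.1-Zsig} can at best produce a small explicit ceiling on $n$; it cannot by itself empty the Zsigmondy set. (Heuristically this must be so: the bound \eqref{e4-Zsig} carries an additive $+2$, and $\hat h_f(c)$ is comparable to $h(c)$, which is itself dominated by $C$, so the logarithm in \eqref{e4-Zsig} cannot be made large and negative at the base point $c$.)

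The paper's proof does exactly what your first paragraph envisions but stops earlier: with $C=\log 2+h(c)$ and Ingram's lower bound $\hat h_f(c)\geq\tfrac{1}{d}h(c)$ (valid for $|c|>2^{d/(d-1)}$), Theorem \ref{thm1.1-Zsig} yields $n\leq 5$. It then finishes with a separate, elementary argument: the growth estimate $|f(z)|\geq|z|^{d-1}$ whenever $|z|>2$, combined with the rigid divisibility property of $(A_n)$ (Lemma \ref{lem2.2-Zsig} / Corollary \ref{cor2.3-Zsig}), rules out $n\in\{1,\dots,5\}$ directly, since $|A_n|$ grows too quickly to divide $\prod_{q\mid n}A_{n/q}$. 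That second step is what your proposal is missing, and it cannot be avoided. Your non-archimedean lower bound $\hat h_f(c)\geq\log q$ is true and could substitute for Ingram's bound in producing a small ceiling on $n$, but by itself it is weaker when $q$ is small relative to $|p|$, and in any case the ceiling will not go below $1$.
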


Next, we apply Theorem \ref{thm1.1-Zsig} to provide an explicit and uniform bound on the Zsigmondy set for the orbit of $0$ of polynomials $f(z)=z^d+z^e+c \in \Q[z]$ where $d>e\geq 2$. In particular, we prove the following result. 
%Now we consider the polynomials are of the special form  $f(z)=z^d+z^e+c \in \Q[z]$ such that $d>e\geq 2$. In the next theorem, we apply Theorem \ref{thm1.1-Zsig} to provide a uniform bound for the Zsigmondy set for the orbit of $0$  of these polynomials. 
	
\begin{theorem} \label{thm1.3-Zsig}
		Let $f(z)=z^d+z^{e}+c \in \Q[z]$ be a polynomial of degree $d >e\geq 2$ such that $c =\frac{a}{b} \in \Q$ and $|c| >2$. If $n \in \mathcal{Z}(f,0)$, then $n \leq 6$.
\end{theorem}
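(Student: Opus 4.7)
My plan is to derive Theorem~\ref{thm1.3-Zsig} as a direct application of Theorem~\ref{thm1.1-Zsig} to $f(z) = z^d + z^e + c$, after verifying the structural hypotheses and computing explicit upper bounds for the right-hand side of \eqref{e4-Zsig}. For this $f$ the non-zero coefficients occur only at $i \in \{0, e, d\}$ with $a_d = a_e = 1$ and $a_0 = c$; since $e \ge 2$ and $|c| > 2 \ge 1$, the baseline hypotheses $a_1 = 0$ and $|a_0| \ge 1$ are automatic. To check \eqref{e3-Zsig} at $z = a_0 = c$ I would case-split on the parities of $d$ and $e$ and on $\operatorname{sgn}(c)$, noting from \eqref{e2-Zsig} that $n^{\pm} \in \{1, e\}$. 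The left-hand side of \eqref{e3-Zsig} is then either $|c|^{d-1}+|c|^{e-1}$ or $|c|^{d-e}$, while the right-hand side is at most $|c|+2$; so $|c|>2$ together with $d-1 \ge 2$ immediately settles the first type of case, and $d-e \ge 2$ settles the second, leaving only the borderline configurations with $n^- = e$, $\{0,e\} \subseteq N^-$ and $d-e = 1$. Those I would handle by a separate direct examination of the first few iterates $f^n(0)$, using $|c|>2$ to separate the prime factors of successive values.

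With the hypotheses of Theorem~\ref{thm1.1-Zsig} in place, the next step is to bound the two quantities appearing in \eqref{e4-Zsig}. For the canonical height I would invoke the telescoping identity
\[
\hat h_f(c) = h(c) + \sum_{k \ge 0} d^{-k-1}\bigl(h(f^{k+1}(c)) - d\, h(f^k(c))\bigr);
\]
because $|c|>2$ and the archimedean leading term of $f$ dominates at each iterate, the correction series is uniformly bounded, yielding $\hat h_f(c) \ge \alpha \log |c|$ for an explicit $\alpha > 0$ depending only on $d$. For the constant $C = \sum_v \log C_v$ I would apply Remark~\ref{rem2.9-Zsig} place by place: only the finite primes dividing the denominator of $c$ contribute non-trivially to the finite-place sum, with that contribution bounded by a small multiple of $h(c)$, while the archimedean contribution depends only on $d$ and $|c|$. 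Substituting these bounds into \eqref{e4-Zsig} collapses its right-hand side to an expression of the form $\tfrac{2}{\log d} \log\!\bigl(\kappa\, d/(d-1)\bigr) + 2$ for an absolute constant $\kappa$ independent of $c$ and $d$.

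The final step is the arithmetic check that $\tfrac{2}{\log d} \log(\kappa\, d/(d-1)) + 2 \le 6$ for every $d \ge 3$, which is immediate once $\kappa$ has been pinned down (one needs roughly $\kappa \le d(d-1)$, and the estimates above produce a much smaller universal $\kappa$). The main obstacle I expect is twofold: first, the borderline parity sub-cases with $d-e = 1$ in which \eqref{e3-Zsig} is not directly applicable and a hand computation on small iterates is required; and second, squeezing the place-by-place constants in $C$ and $\hat h_f(c)$ tightly enough to yield the clean numerical bound $6$, rather than a larger value, uniformly in $c$ and $d$.
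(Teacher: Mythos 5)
Your high-level strategy matches the paper's: apply Theorem~\ref{thm1.1-Zsig}, verify its hypotheses for $f(z)=z^d+z^e+c$, obtain explicit lower bounds for $\hat h_f(c)$ and upper bounds for $C$, then run the arithmetic. But two of your three load-bearing steps diverge from what the paper does, and both are under-specified in ways that matter.

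First, the canonical height lower bound. You propose a telescoping identity and assert that ``the correction series is uniformly bounded'' because the archimedean leading term dominates. That is not enough: each term $h(f^{k+1}(c)) - d\,h(f^k(c))$ is a \emph{global} height difference, and its sign is governed not just by the archimedean place but also by possible cancellation at the primes dividing the denominator $b$ of $c$. The paper does not telescope by hand; it invokes Lemma~\ref{lem4.1-Zsig} (Benedetto et al.), which packages exactly this analysis into the resultant $R=\operatorname{Res}(f_1,f_2)=b^d$ and the quantity $D$ of \eqref{e16-Zsig}, giving $\hat h_f(x)\ge d^{-i}\bigl[h(f^i(x))-\tfrac{d}{d-1}h(c)\bigr]$. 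Combined with the archimedean estimate $|f(z)|\ge|z|^{d-2}$ for $|z|\ge|c|>2$ (and the sharper $|f(c)|\ge|c|^2$ when $d=3$), this yields the concrete bounds $(d-1)\hat h_f(c)\ge h(c)$ for $d\ge 5$ and $(d-1)\hat h_f(c)\ge\tfrac13 h(c)$ for $d\in\{3,4\}$. Your sketch gives no comparable control on the non-archimedean contributions; absent that, $\alpha$ is not ``explicit.''

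Second, the borderline case $d=e+1$. You correctly identify that \eqref{e3-Zsig} can fail here, but your remedy --- ``direct examination of the first few iterates, using $|c|>2$ to separate the prime factors'' --- misreads what \eqref{e3-Zsig} is for. It is only a sufficient condition for $|f^n(c)|\ge 1$, and Remark~\ref{rem3.2-Zsig} explicitly says you may discard it whenever $|f^n(c)|\ge 1$ can be established by other means. That is how the paper handles $d=e+1$: the inequality $|f(z)|\ge|z|^{d-2}$ for $|z|\ge|c|>2$ gives $|f^n(c)|\ge 1$ directly, so Theorem~\ref{thm1.1-Zsig}'s conclusion \eqref{e4-Zsig} still applies and no separate finite check is needed. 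A hand examination of small iterates would in any case have to cover all $n\le 6$ for all $c$ with $|c|>2$ and $b\ge 2$, which is not a finite check.

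Two smaller points: the final bound does not collapse to the form $\tfrac{2}{\log d}\log(\kappa d/(d-1))+2$ with $\kappa$ independent of $d$ --- the paper ends up with $\tfrac{2}{\log d}\log\bigl(3d(1+\log 2/\log 5)\bigr)+2$, which for $d=3$ is about $6.65$, hence $n\le 6$, but that lands below $7$ by a thin margin and relies on the hypothesis $c\in\Q\setminus\Z$ to guarantee $h(c)\ge\log 5$. Your sketch never uses that $c$ is non-integral, so your ``universal $\kappa$'' claim would need to be revisited.
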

	
The proof of Theorem \ref{thm1.1-Zsig} is given in Section \ref{sec3-Zsig}. The method used in proving the theorem are inspired by the work of Krieger \cite{Krieger2013}. We would like to point out that the upper bound on $\mathcal{Z}(f,0)$ for polynomials of type \eqref{e1-Zsig} is enough for the upper bound on Zsigmondy set $\mathcal{Z}(g, u)$ of the sequence $(g^n(u)-u)_{n\geq 1}$ for any polynomial $g(z) \in \Q[z]$ with a critical point $u \in \Q$.  A simple calculation will yield that $g^n(u)-u=f^n(0)$, where $f(z) \in \Q[z]$ is the polynomial defined as $f(z) = g(z+u) - u$.

In Theorem \ref{thm1.3-Zsig}, we have taken $c$ to be rational which are not integers, because the case for an integer $c$ has been solved by Shokri \cite{Shokri2022}. In Section \ref{sec4-Zsig}, we give the proof of Theorem \ref{thm1.3-Zsig}. In Section \ref{sec5-Zsig}, we use the method similar to those used by Krieger \cite{Krieger2013} to obtain the upper bound of $\mathcal{Z}(f,0)$ when $|c|<2$ with certain assumptions on parity of $d$ and $e$.

Zsigmondy questions of this type also connect to broader problems in number theory and arithmetic dynamics. In 2013, assuming the $abc$-conjecture, Gratton, Nguyen and Tucker \cite{GNT13} proved the finiteness of Zsigmondy set for the numerator sequence of infinite orbit under rational iteration. Silverman and Voloch \cite{SiVo09} used Zsigmondy results of Ingram and Silverman \cite{InSil09} to prove that there is no dynamical Brauer-Manin obstruction for dimension $0$ subvarieties under morphisms $\phi$ between projective number field of degree at least $2$, whereas Faber and Voloch \cite{FaVo11}, have used the  Zsigmondy results of \cite{InSil09} in studying the nonarchimedean convergence of Newton's method.
 
\section{Auxiliary results}

Throughout the paper, $p$ will denote a prime number, and $v_p(\alpha)$ will denote the $p$-adic valuation of an integer $\alpha$. Let $f(z)$ be as in \eqref{e1-Zsig} and we write the $n^{th}-$iteration $f^n(0)$ in lowest form as
\begin{equation}\label{e5-Zsig}
	f^n(0)=\frac{A_n}{B_n},
\end{equation} 
where $B_n>0$ and co-prime to $A_n$. Recall the definition \[\mathcal{Z}(f,0):= \{n \in \mathbb{N} : A_n {\rm\ has\ no\ primitive\ prime\ divisor}\}.\]

At first  we will establish the rigid divisibility of the sequence $(A_n)_{n \geq 0}$ and state lemmas related to this property. Next, we define the concepts of local and global canonical heights and state results related to properties of canonical heights.

 \subsection{Rigid divisibility property:}
 A sequence $(u_n)_{n\geq 0}$ of integers is said to be a {\em rigid divisibility sequence} if for every prime $p$ the following properties hold:
 \begin{enumerate}
 \item If $v_p(u_n)>0$, then $v_p(u_{kn}) = v_p(u_n)$ for all $k\geq 1$.
 \item If $v_p(u_n)>0$ and $v_p(u_m)>0$, then $v_p(u_{\gcd(n,m)})>0$.
 \end{enumerate}

\iffalse
\noindent Let $f(z)$ be as above in \eqref{e1-Zsig} and we write the $n^{th}-$iteration $f^n(0)$ in lowest form as
	\begin{equation}
	f^n(0)=\frac{A_n}{B_n},
	\end{equation} 
	where $B_n>0$ and co-prime to $A_n$. \fi
\noindent Let $p$ be a prime such that $p\mid A_{n_0}$ for some $n_0\in \N\cup\{0\}$. Set \(k(p) = \min\{n: p \mid A_n\}.\)

\begin{lemma}\label{lem2.1-Zsig}
Let $f(z)$ be as above in \eqref{e1-Zsig} and $A_n$ as in \eqref{e5-Zsig}. Suppose that $p$ is a prime that divides some element of the sequence $(A_n)_{n\geq 0}$. Then for every $n\in \N$, we have 
\[v_p (A_n)= \begin{cases}
v_p(A_{k(p)}) & \mbox{if}\;\; k(p)\mid n,\\
0 & \mbox{else.}
\end{cases}\]
\end{lemma}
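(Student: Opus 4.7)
The plan is to work $p$-adically, exploiting the condition $a_1=0$, which makes $0$ a critical point of $f$ and gives the factorization $f(z) - a_0 = z^2 g(z)$ with $g(z) = a_2 + a_3 z + \cdots + a_d z^{d-2} \in \Q[z]$. Set $\alpha_n := f^n(0) = A_n/B_n$ in lowest terms and $k := k(p)$. I would begin by arguing that, under the standing hypothesis that $p$ divides some $A_{n_0}$, the map $f$ has good reduction at $p$, i.e.\ $v_p(a_i) \geq 0$ for all $i$. This is either built into the setup (only such primes can actually appear as divisors of numerators in the relevant setting) or derived by noting that a negative $p$-valuation in some coefficient would propagate through the iteration in a way incompatible with the assumption $p \mid A_{n_0}$. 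In any case, $f$ then sends $\Z_p$ to $\Z_p$, so each $\alpha_n \in \Z_p$, and coprimality of $A_n,B_n$ forces $v_p(B_n)=0$ and $v_p(\alpha_n) = v_p(A_n) \geq 0$ for every $n$.

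With this in hand, the two halves of the assertion follow from mod-$p$ periodicity and an inductive valuation argument. For the ``miss'' half (when $k \nmid n$), reduce the orbit modulo $p$: in $\mathbb{F}_p$ we have $\bar\alpha_k = 0 = \bar\alpha_0$, so the reduced orbit is purely periodic of period dividing $k$. By the minimality of $k=k(p)$ the period equals $k$ exactly, so $\bar\alpha_n=0$ iff $k\mid n$, giving $v_p(A_n)=0$ when $k \nmid n$. For the ``hit'' half (when $k \mid n$), set $m := v_p(A_k) \geq 1$ and induct on $j \geq 1$ that $v_p(\alpha_{jk}) = m$. The inductive step uses $\alpha_{(j+1)k} = f^k(\alpha_{jk})$: setting $\beta_i := f^i(\alpha_{jk})$ and $\gamma_i := f^i(0) = \alpha_i$, one has $\beta_0 - \gamma_0 = \alpha_{jk}$ with $v_p = m$, and $\beta_1 - \gamma_1 = f(\alpha_{jk}) - a_0 = \alpha_{jk}^2\, g(\alpha_{jk})$ has $v_p \geq 2m$ since $g \in \Z_p[z]$ and $\alpha_{jk} \in \Z_p$. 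The $p$-adic Lipschitz property of polynomials in $\Z_p[z]$ then yields $v_p(\beta_i - \gamma_i) \geq 2m$ for every $i \geq 1$. Taking $i = k$ gives $v_p(\alpha_{(j+1)k} - \alpha_k) \geq 2m > m = v_p(\alpha_k)$, so the non-archimedean triangle inequality forces $v_p(\alpha_{(j+1)k}) = m$, closing the induction.

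The main obstacle is the preliminary step of securing $p$-integrality of the coefficients of $f$ cleanly from the bare hypothesis that $p$ divides some $A_n$; the rest is routine bookkeeping of $p$-adic valuations. Once integrality is in hand, the critical-point factorization $f(z) - a_0 = z^2 g(z)$ is precisely what upgrades the naive Lipschitz bound $v_p(f(\alpha) - f(\beta)) \geq v_p(\alpha-\beta)$ to the doubled bound $\geq 2m$ needed to ``freeze'' the valuation along multiples of $k$, in the spirit of the analogous rigid-divisibility arguments of Rice~\cite{Rice2007}, Ingram--Silverman~\cite{InSil09}, and Krieger~\cite{Krieger2013}.
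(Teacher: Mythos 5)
Your strategy (secure $p$-integrality of the coefficients, then combine mod-$p$ periodicity for the ``miss'' case with the critical-point factorization $f(z)-a_0=z^2g(z)$ and the ultrametric Lipschitz bound for the ``hit'' case) is the same approach as the source the paper cites -- Krieger's Lemma 2.3, which is the paper's entire stated proof -- and the two halves of your argument are correctly executed once good reduction at $p$ is in hand. You are also right to flag the good-reduction step as ``the main obstacle.''

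However, your proposed way of resolving that obstacle does not work, and this is a genuine gap. You suggest that a negative $p$-valuation in some coefficient ``would propagate through the iteration in a way incompatible with $p\mid A_{n_0}$,'' but that is false for the general shape \eqref{e1-Zsig}. Take $f(z)=p^{-3}z^3+z^2+p$ for an odd prime $p$: here $a_1=0$ as required, $f(0)=p$ so $A_1=p$ and $p\mid A_1$, yet $v_p(a_3)=-3<0$. Worse, the conclusion of Lemma~\ref{lem2.1-Zsig} itself fails for this $f$: $f^2(0)=1+p+p^2$ gives $v_p(A_2)=0$, while $k(p)=1$ would force $v_p(A_2)=v_p(A_1)=1$. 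So no proof of the lemma as literally stated can exist; some additional $p$-integrality hypothesis is needed. In Krieger's setting $f(z)=z^d+c$, and in the paper's applications $f(z)=z^d+z^e+c$, the only coefficient allowed to be a non-integer is $a_0=c$, and there $p\mid A_1=\mathrm{numerator}(a_0)$ does force $p\nmid\mathrm{denominator}(a_0)$, giving $f\in\Z_p[z]$; that is what makes your Lipschitz and mod-$p$ arguments legitimate. You should therefore either add an explicit hypothesis that all $a_i$ are $p$-integral whenever $p$ divides some $A_n$ (equivalently, restrict to the coefficient shapes actually used in the paper), or record the counterexample showing the blanket statement fails. With that hypothesis added, your argument is complete and correct.
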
	

\begin{proof}
The proof follows from \cite[Lemma 2.3]{Krieger2013}.
\end{proof}

\noindent One can see that the following result is a consequence of Lemma \ref{lem2.1-Zsig}. 

\begin{lemma}\label{lem2.2-Zsig}
Let $f(z)$ be as above in \eqref{e1-Zsig} and $A_n$ as in \eqref{e5-Zsig}. Suppose that $n\in \N$ such that $A_n$ has no primitive prime divisor, that is, $n\in \mathcal{Z}(f,0)$. Then 
\begin{equation} \label{e6-Zsig}
A_n \mid \prod_{\substack{q \mid n \\ q \text{ prime}}} A_{\frac{n}{q}},
\end{equation}
where the product is taken over all distinct primes $q$ which divide $n$.
\end{lemma}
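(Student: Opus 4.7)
The plan is to derive this divisibility statement prime-by-prime from the rigid divisibility property captured in Lemma~\ref{lem2.1-Zsig}. It suffices to show that for every prime $p$ with $v_p(A_n)>0$, there is some prime $q\mid n$ such that $v_p(A_{n/q})\geq v_p(A_n)$; summing over primes then gives the divisibility in \eqref{e6-Zsig}.

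First I would fix a prime $p$ dividing $A_n$ and invoke the hypothesis $n\in \mathcal{Z}(f,0)$: since $p$ is not a primitive prime divisor of $A_n$, there must exist some index $m<n$ with $p\mid A_m$. Consequently $k(p)\leq m<n$, and Lemma~\ref{lem2.1-Zsig} forces $k(p)\mid n$. Therefore $k(p)$ is a \emph{proper} divisor of $n$, so the integer $n/k(p)$ is strictly greater than $1$ and admits some prime factor $q$. This $q$ certainly divides $n$, and by construction $k(p)\mid n/q$.

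Applying Lemma~\ref{lem2.1-Zsig} once more, this time to the index $n/q$, gives $v_p(A_{n/q})=v_p(A_{k(p)})$. The same lemma applied to $n$ (which is also a multiple of $k(p)$) yields $v_p(A_n)=v_p(A_{k(p)})$. Combining these two equalities produces $v_p(A_{n/q})=v_p(A_n)$, and hence
\[
v_p(A_n)\;\leq\; v_p\!\left(\prod_{\substack{q\mid n\\ q\text{ prime}}} A_{n/q}\right).
\]
Since this holds for every prime $p$ dividing $A_n$ and the right-hand side has nonnegative $p$-adic valuation for primes not dividing $A_n$, the divisibility \eqref{e6-Zsig} follows.

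I do not anticipate a serious obstacle: the only subtlety is the elementary observation that any proper divisor of $n$ divides $n/q$ for at least one prime $q\mid n$, which is what lets us convert ``$k(p)$ is a proper divisor of $n$'' into the desired membership in $\{n/q : q\mid n\}$. Everything else is a direct bookkeeping of $p$-adic valuations via the rigid divisibility property already established.
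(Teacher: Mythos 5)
Your proof is correct and is essentially the argument the paper intends (the paper merely remarks that Lemma~\ref{lem2.2-Zsig} is a consequence of Lemma~\ref{lem2.1-Zsig} without spelling it out): fix $p\mid A_n$, note the absence of a primitive prime divisor forces $k(p)<n$, use Lemma~\ref{lem2.1-Zsig} to get $k(p)\mid n$, pick a prime $q\mid n/k(p)$ so that $k(p)\mid n/q$, and conclude $v_p(A_{n/q})=v_p(A_{k(p)})=v_p(A_n)$, which gives the valuation inequality prime by prime. The one implicit point worth noting is that the minimum in $k(p)$ is taken over $n\geq 1$ (matching the paper's definition of primitive prime divisor, which only tests $1\leq m<n$), so that $A_0=0$ does not spoil the argument; with that understood, everything checks out.
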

Taking absolute values and logarithms, we immediately have the following inequality, which will provide the starting point of all effective computations.
\begin{corollary}\label{cor2.3-Zsig}
Let $f(z)$ be as above in \eqref{e1-Zsig} and $A_n$ as in \eqref{e5-Zsig}. Suppose $A_n$ has no primitive prime divisor. Then 
\[\log |A_n| \leq \sum_{\substack{q \mid n \\ q: \text{prime}}} \log |A_{\frac{n}{q}}|.\]
\end{corollary}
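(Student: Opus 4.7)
\medskip

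The plan is essentially a one-line deduction from the preceding Lemma \ref{lem2.2-Zsig}, so the proof proposal is short. Since $n \in \mathcal{Z}(f,0)$ by hypothesis (that is, $A_n$ has no primitive prime divisor), Lemma \ref{lem2.2-Zsig} applies and gives the divisibility relation
\[
A_n \ \Big|\ \prod_{\substack{q \mid n \\ q \text{ prime}}} A_{\frac{n}{q}}
\]
in $\Z$. First, I would pass to absolute values: if $a \mid b$ in $\Z$ with $b \neq 0$, then $|a| \le |b|$, so
\[
|A_n| \ \le\ \prod_{\substack{q \mid n \\ q \text{ prime}}} \left| A_{\frac{n}{q}} \right|.
\]
(Here one should note that the product on the right is nonzero; indeed, if some $A_{n/q} = 0$ then $f^{n/q}(0) = 0$, which forces $0$ to be preperiodic and contradicts the standing infinite-orbit setting in which Zsigmondy sets are defined. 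Equivalently, $A_n \neq 0$ is part of the hypothesis that $A_n$ even has prime divisors to discuss.)

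Next, I would take logarithms on both sides. Since $\log$ is monotone increasing on the positive reals and turns products into sums, we obtain
\[
\log |A_n| \ \le\ \sum_{\substack{q \mid n \\ q \text{ prime}}} \log \left| A_{\frac{n}{q}} \right|,
\]
which is precisely the claimed inequality. There is no real obstacle here; the only subtlety worth flagging is the non-vanishing of the terms $A_{n/q}$, which is guaranteed by our running assumption that $\mathcal{O}_f(0)$ is infinite so that none of the numerators $A_m$ vanish. This completes the proof of Corollary \ref{cor2.3-Zsig}.
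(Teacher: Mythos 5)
Your proof is correct and matches the paper's approach exactly: the paper introduces Corollary \ref{cor2.3-Zsig} with the sentence ``Taking absolute values and logarithms, we immediately have the following inequality,'' which is precisely the two steps you spell out. Your added remark about the non-vanishing of $A_{n/q}$ is a sensible clarification that the paper leaves implicit, relying on the standing assumption that $\mathcal{O}_f(0)$ is infinite.
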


\subsection{Canonical heights:}
%$\{|\cdot |_{\infty}, |\cdot |_2, |\cdot |_3, \ldots \} $
Let $V_{\Q}$ be the set of places of $\Q$. For $p \in V_{\mathbb{Q}}$, we choose a normalized absolute value $|\cdot |_p$ in the following way. If $p = \infty$, then $|\cdot |_p$ is the ordinary absolute value on $\mathbb{Q}$, and if $p$ is prime, then the absolute value is the $p$-adic absolute value on $\mathbb{Q}$, with $|x|_p= p^{-v_p(x)}$ for any $x \in \Q^{\times}$.  These absolute values satisfy the product formula
\[\prod_{v \in V_{\Q}}|x|_{v} =1,\]
for any $x \in \Q^{\times}$. The standard (global) height function on $\Q$ is the function $h: \Q \to \R$ given by  $h(x) = \log \max \{|m|_{\infty}, |n|_{\infty}\}$, where $x =m/n$ in lowest terms. Equivalently, 
\begin{equation}\label{e7-Zsig}
h(x) = \sum_{v \in V_{\Q}} \log \max \{1, |x|_{v}\}, \quad \mbox{for any $x\in \Q^{\times}$}.
\end{equation}
This height function $h$ extends to the algebraic closure $\bar{\Q}$ of $\Q$ (see \cite[Section 3.1]{Silverman2007}). For any fixed polynomial $f(z) \in \Q[z]$ (or more generally, rational function) of degree $d\geq 2$, the canonical height function $\hat{h}_{f}:\bar{\Q} \to \R$ for $f$ is given by 
\begin{equation}\label{e8-Zsig}
\hat{h}_{f}(x): = \lim_{n\to \infty} \frac{h\left(f^n(x)\right)}{d^n}.
\end{equation}

\begin{lemma} [\cite{CaSil93, Silverman2007}]\label{heightprop}
	The canonical height function satisfy the following properties:
\begin{enumerate}[label=(\alph*)]
\item There is a constant $C$ depending only on $f$ such that 
$|\hat{h}_{f}(x)- h(x)| \leq C$ for every $x\in \bar{\Q}$.
\item $\hat{h}_{f}\left(f(x)\right)= d\cdot \hat{h}_{f}(x)$ for all $x\in \bar{\Q}$.
\end{enumerate}
\end{lemma}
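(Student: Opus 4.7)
The plan is to first reduce property (b) to a formal re-indexing of the defining limit \eqref{e8-Zsig}, and then to prove (a) by a telescoping argument that simultaneously verifies the existence of that limit. Both parts rest on a single one-step functoriality estimate comparing $h(f(x))$ with $d\cdot h(x)$.

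For (b), using $f^n\circ f = f^{n+1}$ I would write
\[
\hat h_f(f(x)) \;=\; \lim_{n\to\infty}\frac{h(f^{n+1}(x))}{d^n} \;=\; d\lim_{n\to\infty}\frac{h(f^{n+1}(x))}{d^{n+1}} \;=\; d\,\hat h_f(x),
\]
which is legitimate once convergence in \eqref{e8-Zsig} is established in (a). For (a), the central step is to prove the uniform estimate
\[
|h(f(x)) - d\,h(x)| \;\leq\; C_0 \qquad\text{for every } x\in\bar\Q,
\]
with $C_0=C_0(f)$ depending only on the coefficients of $f$. Working place by place via \eqref{e7-Zsig} (extended to $\bar\Q$ in the standard way), the upper bound $h(f(x)) \leq d\,h(x) + O(1)$ follows from the triangle, respectively ultrametric, inequality applied to $f(x)=\sum_i a_i x^i$; the matching lower bound uses that the leading term $a_d x^d$ dominates whenever $|x|_v$ is sufficiently large, with the error controlled by the $a_i$. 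Only the finitely many places at which some $a_i$ fails to be a unit contribute non-trivially, producing the global constant $C_0$.

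Given this estimate, telescoping gives, for every $n\geq 1$,
\[
\left|\frac{h(f^n(x))}{d^n} - h(x)\right| \;\leq\; \sum_{k=0}^{n-1}\frac{|h(f^{k+1}(x)) - d\,h(f^k(x))|}{d^{k+1}} \;\leq\; \frac{C_0}{d-1},
\]
and the same bound on successive increments shows that $(d^{-n}h(f^n(x)))_{n\geq 0}$ is Cauchy, hence convergent. Passing to the limit yields both the existence of $\hat h_f(x)$ and the inequality $|\hat h_f(x) - h(x)| \leq C_0/(d-1) =: C$, proving (a); and once (a) is in hand, (b) is justified as above.

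The main obstacle is the one-step inequality $|h(f(x)) - d\,h(x)|\leq C_0$. Its upper half is essentially immediate, but the lower half — the assertion that heights cannot drop substantially under $f$ — is more delicate. The natural strategy is to isolate the dominant monomial $a_d x^d$ at each place, handling the regimes $|x|_v$ large and $|x|_v$ bounded separately (the latter absorbed into the constant), and at archimedean places to quantify the non-vanishing of $f$ via a Mahler-measure type comparison with the leading term. The resulting constant is universal in $x$, depends only on $f$, and gives the $C$ that appears in (a).
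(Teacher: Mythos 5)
Your argument is correct and is precisely the standard proof of this lemma as given in the cited sources (Call--Silverman; Silverman, \emph{The Arithmetic of Dynamical Systems}, Thm.~3.20): Tate's telescoping trick applied to the one-step estimate $|h(f(x))-d\,h(x)|\leq C_0$, whose lower half is the only nontrivial point and is handled exactly as you describe, by isolating the leading term place by place. The paper itself offers no proof, only the citation, so there is nothing further to compare.
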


\begin{defn}
For $v \in V_{\Q}$, let $\C_v$ denote the completion of an algebraic closure of $\Q$ with respect to $v$. The function $h_v: \C_v \to [0, \infty)$ given by
\[ h_{v}(x) := \log \max \{1, |x|_{v}\} \]
is called the \textit{standard local height} at $v$. Using this, \eqref{e7-Zsig} can be rewritten as
\[h(x) = \sum_{v \in V_{\Q}} h_{v}(x),\; \quad \mbox{for any $x\in \Q^{\times}$}.\]
\noindent If $f(z) \in \Q[z]$ is a polynomial of degree $d \geq 2$, the associated \textit{local canonical height}	is the function $\hat{h}_{v, f} (x) : \C_{v} \rightarrow [0, \infty) $ given by
\begin{equation}\label{e9-Zsig}
\hat{h}_{v, f} (x) = \lim_{n\to \infty} \frac{h_{v} \left(f^n(x)\right)}{d^n}.
\end{equation} 
\end{defn}

\noindent The local canonical heights provide a similar decomposition for $\hat{h}_{f}$, as follows.
 \begin{lemma}[\cite{CaSil93}, Theorem 2.3]
 Let $f(z)\in \Q[z]$ be a polynomial of degree $d\geq 2$. Then for all $x\in \Q$
 \[\hat{h}_{f}(x) = \sum_{v\in V_{\Q}}\hat{h}_{v, f} (x).\]
 \end{lemma}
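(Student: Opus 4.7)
The plan is to construct the right-hand side $G(x) := \sum_{v \in V_{\Q}} \hat{h}_{v, f}(x)$ independently, show it is a well-defined real number, and then exploit its functional equation to force it to equal $\hat{h}_f(x)$. The key local input is the estimate
\[\bigl|h_v(f(x)) - d \cdot h_v(x)\bigr| \leq C_v \quad \text{for all } x \in \C_v,\]
valid with some $C_v \geq 0$ that depends only on $f$ and $v$, and with the crucial property that $C_v = 0$ for all but finitely many $v$. At archimedean $v$ this bound is a direct triangle-inequality computation on $f$; at a non-archimedean $v$ whose valuation ring contains the coefficients of $f$ and where the leading coefficient $a_d$ is a unit, one has $|f(x)|_v = |a_d|_v |x|_v^d = |x|_v^d$ whenever $|x|_v > 1$, so $C_v = 0$.

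A standard telescoping/Cauchy argument applied to the bound above shows that $h_v(f^n(x))/d^n$ is Cauchy, so $\hat{h}_{v,f}(x)$ exists, and passing to the limit in the telescoping sum yields the uniform comparison
\[\bigl|\hat{h}_{v,f}(x) - h_v(x)\bigr| \leq \frac{C_v}{d-1}.\]
Summing over $v$ is now legitimate: since $h_v(x) = 0$ for all but finitely many $v$ (fixed $x \in \Q$) and $C_v = 0$ for all but finitely many $v$, the series defining $G(x)$ reduces to a finite sum and satisfies
\[\bigl|G(x) - h(x)\bigr| \leq \sum_{v \in V_{\Q}} \frac{C_v}{d-1} =: C < \infty.\]
Moreover, the local defining limit in \eqref{e9-Zsig} immediately gives the place-wise functional equation $\hat{h}_{v,f}(f(x)) = d \cdot \hat{h}_{v,f}(x)$, which sums to $G(f(x)) = d \cdot G(x)$.

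To close the argument, iterate: for each $n \geq 1$,
\[\bigl|G(x) - d^{-n} h(f^n(x))\bigr| = d^{-n}\bigl|G(f^n(x)) - h(f^n(x))\bigr| \leq C/d^n,\]
and letting $n \to \infty$, the right-hand side vanishes while $d^{-n} h(f^n(x)) \to \hat{h}_f(x)$ by definition \eqref{e8-Zsig}. Hence $G(x) = \hat{h}_f(x)$, which is the desired decomposition.

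The main obstacle is the uniformity step, that is, verifying the almost-everywhere vanishing $C_v = 0$ carefully enough to make the termwise summation absolutely convergent and interchangeable with the limit. Once this finiteness is nailed down (via a good-reduction argument at the non-archimedean places avoiding the denominators and leading coefficient of $f$, and an explicit triangle-inequality estimate at the archimedean place), the rest is the same telescoping mechanism that produces the individual local canonical heights, run simultaneously across all places.
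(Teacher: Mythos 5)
The paper offers no proof of this lemma; it is cited directly to Call and Silverman \cite{CaSil93}, Theorem 2.3. Your argument is a correct reconstruction of the standard Call--Silverman proof: the place-wise telescoping bound $|h_v(f(x)) - d\,h_v(x)| \leq C_v$ with $C_v = 0$ at all places of good reduction, the resulting comparison $|\hat{h}_{v,f} - h_v| \leq C_v/(d-1)$, absolute convergence of $G := \sum_v \hat{h}_{v,f}$ to a function differing from $h$ by a bounded amount, the summed functional equation $G \circ f = d\cdot G$, and the contraction argument $|G(x) - d^{-n}h(f^n(x))| \leq C/d^n \to 0$ pinning $G$ down as $\hat{h}_f$. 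One minor stylistic point: rather than invoking the pre-existence of $\hat{h}_f$ in the last step, your estimate already shows directly that $d^{-n}h(f^n(x))$ converges to $G(x)$, which \emph{is} the definition \eqref{e8-Zsig} of $\hat{h}_f(x)$; phrasing it that way removes the appearance of circularity.
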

 
 \noindent The following result of Benedetto \textit{et al.} \cite[Proposition 2.1]{BDJKRZ} is needed to estimate the constant $C$ in Lemma \ref{heightprop}(a).

 \begin{lemma}\label{lem2.7-Zsig}
 Let $K$ be a field with absolute value $v$, let $f(z) \in K[z]$ be a polynomial of degree $d \geq 2$, and let $\hat{h}_{v,f}$ be the associated local canonical height. Write $ f(z)= a_dz^d+ \cdots +a_1z+a_0= a_d (z-\alpha_1)\cdots (z-\alpha_d)$, with $a_i \in K$, $a_d \neq 0$, and $\alpha_i \in \mathbb{C}_{v}$. Let $A= \max \{|\alpha_i|_{v} : i=1,2,\ldots ,d\}$ and $B=|a_d|_{v}^{-1/d}$, and define real constant $C_{v}\geq 1$ by
\[C_v = \begin{cases}
\max \{1,A,B,|a_0|_{v},|a_1|_{v}, \ldots, |a_d|_{v}\} & \mbox{if}\;\; v\; \mbox{is nonarchimedean},\\
\max \{1,A+B,|a_0|_{v}+|a_1|_{v}+ \ldots+ |a_d|_{v}\} & \mbox{if}\; v\; \mbox{is archimedean}.
\end{cases}\]
 Then for all $ x \in \mathbb{C}_{v},$
		$$ \frac{-d \log C_{v}}{d-1} \leq \hat{h}_{v,f}(z)-h_{v}(z) \leq \frac{\log C_{v}}{d-1}. $$
 \end{lemma}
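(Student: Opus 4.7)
The plan is to reduce the bound to a single-step comparison between $h_v(f(w))$ and $d\cdot h_v(w)$ and then telescope using the defining limit of $\hat h_{v,f}$. Writing
\[
\hat h_{v,f}(z) - h_v(z) \;=\; \lim_{N\to\infty}\sum_{n=0}^{N-1}\Bigl(\tfrac{h_v(f^{n+1}(z))}{d^{n+1}} - \tfrac{h_v(f^{n}(z))}{d^{n}}\Bigr) \;=\; \sum_{n=0}^{\infty}\tfrac{1}{d^{n+1}}\bigl[h_v(f^{n+1}(z)) - d\,h_v(f^n(z))\bigr],
\]
one sees that, provided the uniform one-step estimate
\[
-\,d\log C_v \;\leq\; h_v(f(w)) - d\,h_v(w) \;\leq\; \log C_v\qquad\text{for every } w\in\mathbb{C}_v
\]
is known, summing the geometric series $\sum_{n\geq 0} d^{-(n+1)} = 1/(d-1)$ delivers exactly the asymmetric bounds claimed. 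So the real task is to establish the multiplicative form $C_v^{-d}\,\max\{1,|w|_v\}^{d} \leq \max\{1,|f(w)|_v\} \leq C_v\,\max\{1,|w|_v\}^{d}$.

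For the upper inequality I would use the expanded form $f(z)=\sum_{i=0}^d a_iz^i$. In the archimedean case the triangle inequality gives $|f(w)|_v \leq \bigl(\sum_i |a_i|_v\bigr)\max\{1,|w|_v\}^d$; in the nonarchimedean case the ultrametric inequality gives $|f(w)|_v \leq (\max_i |a_i|_v)\max\{1,|w|_v\}^d$. Either way the coefficient is at most $C_v$, and combining with the trivial estimate $1 \leq C_v\max\{1,|w|_v\}^d$ yields the upper bound.

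For the lower inequality I would pass to the factored form $f(w)=a_d\prod_i(w-\alpha_i)$ and split on the size of $|w|_v$. When $|w|_v \leq C_v$ the inequality is automatic, since $C_v^{-d}\max\{1,|w|_v\}^d \leq 1 \leq \max\{1,|f(w)|_v\}$. When $|w|_v > C_v \geq A$, the nonarchimedean ultrametric forces $|w-\alpha_i|_v = |w|_v$, so $|f(w)|_v = |a_d|_v |w|_v^d = B^{-d}|w|_v^d \geq C_v^{-d}|w|_v^d$ since $C_v \geq B$. The archimedean subcase is where I expect the main obstacle: the reverse triangle inequality only yields $|w-\alpha_i|_v \geq |w|_v - A$, giving $|f(w)|_v \geq B^{-d}(|w|_v-A)^d$, and one must verify algebraically that $B^{-d}(|w|_v-A)^d \geq C_v^{-d}|w|_v^d$ whenever $|w|_v > C_v$. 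This reduces to the explicit inequality $|w|_v \geq AC_v/(C_v-B)$, which does hold on that range because $C_v \geq A+B$ forces $AC_v/(C_v-B) \leq C_v < |w|_v$. With both one-step estimates in hand, the telescoping argument from the first paragraph completes the proof.
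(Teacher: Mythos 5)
Your argument is correct: the telescoping decomposition, the one-step estimate $C_v^{-d}\max\{1,|w|_v\}^d \leq \max\{1,|f(w)|_v\} \leq C_v\max\{1,|w|_v\}^d$ (upper bound from the expanded form, lower bound from the factored form with the case split at $|w|_v = C_v$), and the final reduction to $|w|_v \geq AC_v/(C_v - B)$ via $C_v \geq A+B$ all check out, and summing the geometric series reproduces exactly the asymmetric constants $-d\log C_v/(d-1)$ and $\log C_v/(d-1)$. The paper does not reprove this lemma but imports it from Benedetto et al.\ \cite{BDJKRZ}, and your proof is essentially the standard argument given there, so there is nothing to add.
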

 \begin{remark} \label{rem2.8-Zsig}
		Note that from \cite[Remark 2.3]{BDJKRZ}, if $v$ is nonarchimedean then $A= \max\{|\alpha_i|_v\}$ can be directly computed from the coefficients of $f$. Specifically,
		$$ A=\max \left\{\left|\frac{a_j}{a_d}\right|^{1/(d-j)}_{v} : 0 \leq j \leq d-1\right\}. $$ On the other hand, if $v$ is archimedean then $A \leq \sum_{j=0}^{d-1} |a_j/a_d|^{1/(d-j)}_{v}$. Hence, the constant $C_{v}$ can be easily computed from the coefficients of $f$.
\end{remark}

{\iffalse 		
Let us call $\mathfrak{C}_{v} =\max \{c_{v},C_{v}\}$. Then by Proposition \ref{lem2.7-Zsig}, we obtain
\begin{align*}
	&\frac{-d \log C_{v}}{d-1} \leq \hat{h}_{v,f}(z)-h_{v}(z) \leq \frac{d\log C_{v}}{d-1}
\end{align*}
\fi}

\begin{remark}\label{rem2.9-Zsig}
For $f(z) \in \mathbb{Q}[z]$, taking sum over all places $v$ of $\mathbb{Q}$, we obtain
		\begin{align} \label{e10-Zsig}
			\frac{-d C}{d-1} \leq \hat{h}_f(z) - h(z) \leq \frac{d C}{d-1}
		\end{align}
where $C$ is a constant satisfying $C\geq \sum_{v \in V_{K}} \log C_{v}$.
	\end{remark}
 
\section{Proof of Theorem \ref{thm1.1-Zsig}} \label{sec3-Zsig}

At first we will  establish the lower bound for $|f^k(z)|$. Precisely, we will prove that  $|f^k(z)| \geq 1$ for all $k \geq 1$. This observation will help us to use $h(f^k(0))$ and $\log |A_n|$ interchangeably.
\begin{prop} \label{prop3.1-Zsig}
		Let $f(z) \in \Q[z]$ be a polynomial of degree $d \geq 2$. Write $f(z)= a_dz^d+ \cdots +a_1z+a_0$, with $a_i \in \Q, a_d \neq 0$. Let $z$ be such that $|z| \geq 1$ and satisfies \eqref{e3-Zsig}.  Then for all $k \in \N$, we have $|f^k(z)| \geq |z|$.
\end{prop}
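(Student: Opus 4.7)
The plan is to reduce the proposition to the single-step estimate: for any real $y$ with $|y|\geq |z|$, one has $|f(y)|\geq |y|$. Once this is in hand, the conclusion $|f^k(z)|\geq |z|$ follows immediately by induction on $k$, since if $|f^{k-1}(z)|\geq |z|\geq 1$, then applying the single-step estimate with $y=f^{k-1}(z)$ gives $|f^k(z)|\geq |f^{k-1}(z)|\geq |z|$.

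To establish the single-step estimate, I would separate into two cases according to the sign of $y$. For $y>0$ with $y\geq |z|$, write
\[
f(y)=\operatorname{sgn}(a_d)\Bigl(\sum_{i\in P^+}|a_i|\,y^i-\sum_{i\in N^+}|a_i|\,y^i\Bigr),
\]
which is valid because, by the very definition of $P^+$, every nonzero $a_i$ with $i\in P^+$ has the same sign as $a_d$, while every $a_i$ with $i\in N^+$ has the opposite sign. Taking absolute values, it suffices to show that the bracketed expression is at least $y$. Using that every index $i>n^+$ lies in $P^+$ (since $n^+=\max N^+$ when $N^+\neq\emptyset$, and the claim is trivial otherwise), and that every $i\in N^+$ satisfies $i\leq n^+$, I would bound
\[
\sum_{i\in P^+}|a_i|\,y^i-\sum_{i\in N^+}|a_i|\,y^i\;\geq\; y^{n^+}\Bigl(\sum_{n^+<i\leq d}|a_i|\,y^{i-n^+}-\sum_{i\in N^+}|a_i|\Bigr).
\]
Since $y\geq |z|\geq 1$, the monotonicity $y^{i-n^+}\geq |z|^{i-n^+}$ for $i>n^+$, combined with the first inequality of \eqref{e3-Zsig}, shows that the parenthesized quantity is at least $1$. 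Hence the right-hand side is at least $y^{n^+}\geq y$ (using $n^+\geq 1$ and $y\geq 1$), as desired.

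For $y<0$ with $|y|\geq |z|$, I would substitute $w=-y>0$ and rewrite $f(y)=\sum_i (-1)^i a_i\,w^i$. The sign condition defining $P^-$ and $N^-$ is precisely designed so that the coefficients $(-1)^i a_i$ split into a "positive part" indexed by $P^-$ and a "negative part" indexed by $N^-$, with $\operatorname{sgn}((-1)^d a_d)$ playing the role previously played by $\operatorname{sgn}(a_d)$. Then the identical argument applied with the second inequality of \eqref{e3-Zsig} in place of the first yields $|f(y)|\geq w^{n^-}\geq w=|y|$.

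I expect the main subtlety to be the bookkeeping around the definitions of $P^\pm$, $N^\pm$, $n^\pm$: verifying that indices $i>n^{\pm}$ must lie in $P^{\pm}$, handling the degenerate case $N^{\pm}=\emptyset$ (where $n^{\pm}=1$ by convention and the estimate is essentially immediate), and ensuring that the factorization of $f$ into "positive" and "negative" parts is sign-coherent enough to justify taking absolute values term-by-term. No deep input beyond the triangle inequality and monotonicity of powers is required; the proof is mainly an exercise in tracking signs and exponents.
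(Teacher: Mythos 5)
Your proof is correct and follows essentially the same route as the paper's: the same decomposition of $f$ into $P^{\pm}$/$N^{\pm}$ parts, the same factoring out of $|z|^{n^{\pm}}$, and the same appeal to \eqref{e3-Zsig} together with monotonicity of powers in $y\geq|z|\geq 1$ to propagate the bound along the orbit. The only difference is organizational: you isolate the one-step estimate $|y|\geq|z|\implies|f(y)|\geq|y|$ as a standalone lemma and induct cleanly, whereas the paper writes out the $k=1$ and $k=2$ cases explicitly in \eqref{e11-Zsig} before invoking induction.
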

	\begin{proof} Let $z \in \R$ be such that $|z| \geq 1$ and satisfies \eqref{e3-Zsig}. If $z\geq 1$, then from \eqref{e2-Zsig} and \eqref{e3-Zsig}, we get
		\begin{align}\label{e11-Zsig}
		\begin{split}
			|f(z)| &= \left| \sum_{i \in P^+} a_i z^i +  \sum_{i \in N^+} a_i z^i\right|  \geq \left| \sum_{i \in P^+} a_i z^i \right|  - \left| \sum_{i \in N^+} a_i z^i\right| \\
			&\geq \sum_{i \in P^+} |a_i| |z|^i  - |z|^{n^+} \sum_{i \in N^+} |a_i|  \\
			%&\geq | \sum_{i \in P^+} a_i x^i |  - \sum_{i \in N^+} |a_i|  |x^i|  \\
			%\geq |z|^{n^+} \left( \left| \sum_{n^+<i\leq d} a_i z^{i-n^+} \right|  - \sum_{i \in N^+} |a_i|  \right) \\
			%&= |z|^{n^+} \left( \sum_{n^+<i\leq d} |a_i| |z|^{i-n^+}  - \sum_{i \in N^+} |a_i|  \right) \\
			&\geq  |z|^{n^+} \left( \sum_{n^+<i\leq d} |a_i| |z|^{i-n^+}  - \sum_{i \in N^+} |a_i|  \right) \geq |z|^{n^+} \geq |z|.
			\end{split}
		\end{align}
If $z\leq -1$, then proceeding as in \eqref{e11-Zsig} with the sets $P^-$ and $N^-$ and then using the assumption in \eqref{e3-Zsig}, we obtain 
		\[|f(z)| \geq |z|^{n^-} \geq |z|.\] 
Note that $|f(z)| \geq |z| \geq 1$. If $f(z)\geq 1$, then by the last inequality of \eqref{e11-Zsig}, 
		\begin{align*}
			|f^2(z)| &\geq |f(z)|^{n^+} \left( \sum_{n^+<i\leq d} |a_i| |f(z)|^{i-n^+}  - \sum_{i \in N^+} |a_i|  \right) \\
			&\geq |z|^{n^+} \left( \sum_{n^+<i\leq d} |a_i| |z|^{i-n^+}  - \sum_{i \in N^+} |a_i| \right) \geq |z|^{n^+} \geq |z|.
		\end{align*}
Similarly, if $f(z)\leq -1$, then $|f^2(z)| \geq |z|^{n^-} \geq |z| \geq 1$. Now Proposition \ref{prop3.1-Zsig} follows by induction on $k$.
	\end{proof}
	
\begin{proof}[Proof of Theorem \ref{thm1.1-Zsig}:]
Let $f(z) \in \Q[z]$ be a polynomial of degree $d \geq 2$ as in \eqref{e1-Zsig}. Then from \eqref{e5-Zsig}, we get
		$$ f^{k-1}(a_0)=f^{k}(0)=\frac{A_k}{B_k}. $$
		
Since $|a_0| \geq 1$ and $a_0$ satisfies the inequality \eqref{e3-Zsig}, by Proposition \ref{prop3.1-Zsig}, we have for all $k\geq 1,\; |f^{k-1}(a_0)| \geq |a_0| \geq 1$ and hence $|A_k|\geq B_k$. Thus,
	\begin{equation}\label{e12-Zsig}
	h(f^{k-1}(a_0)) = h(f^k(0))= \log (|A_k|), \quad \text{for all } k \geq 1.
	\end{equation}
Suppose that $ n \in \mathcal{Z}(f,0)$. By Corollary \ref{cor2.3-Zsig}, we have 
\begin{equation}\label{e13-Zsig}
\log |A_n| \leq \sum_{p \mid n} \log |A_{\frac{n}{p}}|
\end{equation}
 where the sum is taken over all distinct prime divisors $p$ of $n$. Then from \eqref{e12-Zsig} and \eqref{e13-Zsig}, we get
 \begin{equation}\label{e14-Zsig}
 h(f^{n-1}(a_0)) \leq \sum_{p \mid n} h(f^{\frac{n}{p}-1}(a_0)).
 \end{equation}
Let $C$ be the constant given in Remark \ref{rem2.9-Zsig}. From \eqref{e10-Zsig}, we have %inequality in \eqref{e14-Zsig} is true if
\begin{equation*}
\hat{h}_f(f^{n-1}(a_0))- \frac{d C}{d-1} \leq \sum_{p \mid n} \left(\hat{h}_f(f^{\frac{n}{p}-1}(a_0)) + \frac{d C}{d-1}\right).
\end{equation*}
Further, using functional relation in Lemma \ref{heightprop}(b), we get
\begin{equation*}
d^{n-1}\hat{h}_f(a_0) - \frac{d C}{d-1} \leq \hat{h}_f(a_0) \sum_{p \mid n} d^{\frac{n}{p}-1} + \omega(n)\frac{d C}{d-1}.
\end{equation*} %This implies
%\begin{equation*}
%d^{n+1} \hat{h}_f(a_0) \leq (\omega(n)+1)\frac{d C}{d-1} + %d\hat{h}_f(a_0) \sum_{p \mid n} d^{\frac{n}{p}}
%\end{equation*} 
where $\omega(n)$ denote the number of distinct prime factors of $n$. A simple calculation gives%will yield that the above inequality is true if
\begin{equation}\label{e15-Zsig}
\frac{d^{n-1} - \omega(n) d^{\frac{n}{2}-1}}{\omega(n)+1} \leq \frac{d^{n-1} - \frac{1}{d} \sum_{p \mid n} d^{\frac{n}{p} } }{ \omega(n)+1} \leq \frac{dC}{(d-1) \hat{h}_f(a_0)}.
\end{equation}
Further, for $n \geq 4$, one can easily obtain that $2\omega(n)+1 \leq n-1 \leq 2^{\frac{n}{2}} -1 < d^{\frac{n}{2}}$. This yields
\begin{equation} \label{e-Zsig}
	2\omega(n) + 1 < d^{\frac{n}{2}}, \text{ for all } d \geq 3 \text{ and } n \geq 2.
\end{equation}
\noindent Hence, from \eqref{e15-Zsig}, we get

%below is the explaination of the relation specified above
%{To prove $2\omega(n)+1 \leq n-1$ when $n \geq 4$, note that this is true if $\omega(n)=1,2$. So assume $\omega(n) \geq 3$, in this case, it is enough to show that $\omega(n) \leq \frac{n}{2}$ which follows from $ n \geq 2 \cdot 3 \cdot 5^{\omega(n)-2} > 2 \cdot 3 \cdot 2^{2(\omega(n)-2)} \geq 3 \cdot 2^{\omega(n)} $. To show $n \leq 2^{n/2} $ i.e., $n^2 \leq 2^n$, it is true for $n=4$. Further using induction, we have $(n+1)^2 \leq n^2 + 2n+1 \leq n^2 + n^2 \leq 2^{n+1}$ from induction hypothesis.}
\[d^{\frac{n}{2}-1} \leq d^{\frac{n}{2}-1} \frac{d^{\frac{n}{2}}-\omega(n)}{\omega(n)+1} \leq \frac{dC}{(d-1) \hat{h}_f(a_0)}\] implying
\[n \leq \frac{2}{\log d} \log\left( \frac{dC}{(d-1) \hat{h}_f(a_0)} \right) +2.\] This completes the proof of Theorem \ref{thm1.1-Zsig}. 
\end{proof}

\begin{remark} \label{rem3.2-Zsig}
We use \eqref{e3-Zsig} and $|a_0| \geq 1$ to prove that $|f^n(a_0)| \geq 1$, so that we can replace $ \log |A_n| $ by $h(f^n(a_0))$. If we can ensure $|f^n(a_0)| \geq 1$, then without %we don't need 
the assumptions \eqref{e3-Zsig} and $|a_0| \geq 1$ in Theorem \ref{thm1.1-Zsig}, we can directly use the bound on $n$ given there.
\end{remark}

\subsection{Proof of Corollary \ref{cor1.2-Zsig}:} \label{subs3.1-Zsig}
	From Remark \ref{rem2.9-Zsig}, we can easily see that $C= \log 2 + h(c)$ will work for the polynomial $f(z)$. Further, the remark following \cite[Lemma 6]{Ingram2009} gives the lower bound $\hat h_f(c) \geq \frac{1}{d} h(c)$ for $|c| > 2^{\frac{d}{d-1}}$. Using the above values in Theorem \ref{thm1.1-Zsig}, we deduce that $n \leq 5$. Further, with the simple observation $|f(z)| \geq |z|^{d-1}$ whenever $|z| >2$ and the rigid divisibility of numerator of the sequence $f^n(0)$, we deduce that $\mathcal{Z}(f,0) = \emptyset$. \qed

\section{Proof of Theorem \ref{thm1.3-Zsig}} \label{sec4-Zsig}

The following result of Benedetto \textit{et al.} \cite[Lemma 2.1]{BCHKW14} will be used to establish the lower bound of $\hat h_f(x)$ for $x \in \Q^{\times}$.
\begin{lemma} \label{lem4.1-Zsig}
		Let $f(z)=f_1(z)/f_2(z) \in \Q(z)$ where $f_1, f_2$ are relatively prime polynomials in $\Z[z]$ with degree $d:= \max \{\deg f_1,\deg f_2\} \geq 2$. Let $R=Res(f_1, f_2) \in \Z$ be the resultant of $f_1$ and $f_2$, and let 
\begin{equation}\label{e16-Zsig}
D:= \min_{t \in \R\cup\{\infty\}} \frac{\max \left\{|f_1(t)|,|f_2(t)|\right\}}{\max \{|t|^d,1\}}.
\end{equation}
		Then $D>0$, and for all $x\in \mathbb{P}^1(\Q)$ and all integers $i \geq 0$,
\begin{equation}\label{e17-Zsig}
 \hat{h}_{f}(x) \geq d^{-i} \left[ h(f^i(x)) - \frac{1}{d-1} \log \left(\frac{|R|}{D}\right) \right]. 
 \end{equation}
	\end{lemma}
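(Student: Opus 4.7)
My strategy has three stages: verify $D>0$; establish the base case ($i=0$) of \eqref{e17-Zsig}, namely
\[\hat h_f(y)\;\geq\;h(y)-\tfrac{1}{d-1}\log(|R|/D)\quad\text{for all }y\in\mathbb{P}^1(\Q);\]
and then upgrade to general $i$ by invoking the functional equation of Lemma \ref{heightprop}(b). Since iterating that equation gives $\hat h_f(x)=d^{-i}\hat h_f(f^i(x))$, applying the base case to $y=f^i(x)$ immediately produces \eqref{e17-Zsig}.

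\textbf{Positivity of $D$.} I would view
\[\phi(t)=\frac{\max\{|f_1(t)|,|f_2(t)|\}}{\max\{|t|^d,1\}}\]
as a continuous function on the one-point compactification $\R\cup\{\infty\}\cong S^1$. As $t\to\infty$, $\phi(t)$ converges to $\max\{|\mathrm{lc}(f_1)|\cdot\mathbf{1}_{\deg f_1=d},\,|\mathrm{lc}(f_2)|\cdot\mathbf{1}_{\deg f_2=d}\}$, which is nonzero because $d=\max\{\deg f_1,\deg f_2\}$. On $\R$ itself $\phi>0$ because $f_1$ and $f_2$ share no common root by coprimality. Compactness then forces $D=\min\phi>0$.

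\textbf{Base case via local heights.} The proof proceeds place by place. Viewing $f$ projectively as $[F_1:F_2]$, where $F_1,F_2$ are the homogenisations of $f_1,f_2$ to the common degree $d$, the local canonical height \eqref{e9-Zsig} admits the telescoping representation
\[\hat h_{v,f}(y)-h_v(y)=\sum_{n=0}^{\infty} d^{-(n+1)}\bigl(h_v(f(f^n(y)))-d\,h_v(f^n(y))\bigr).\]
Hence any uniform lower bound $h_v(f(z))-d\,h_v(z)\geq\log c_v$ (valid for all $z\in\C_v$) contributes $(\log c_v)/(d-1)$ after summing the geometric series $\sum_{n\geq 0} d^{-(n+1)}=1/(d-1)$. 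Summing over all $v\in V_\Q$ and using $\hat h_f=\sum_v\hat h_{v,f}$ and $h=\sum_v h_v$ yields
\[\hat h_f(y)-h(y)\;\geq\;\tfrac{1}{d-1}\sum_{v\in V_\Q}\log c_v.\]
At the archimedean place, the very definition of $D$ is equivalent to $\max\{|F_1(t,1)|,|F_2(t,1)|\}\geq D\max\{1,|t|^d\}$, giving $c_\infty=D$. At every non-archimedean place $v$, the standard resultant estimate
\[\max\{|f_1(z)|_v,|f_2(z)|_v\}\;\geq\;|R|_v\max\{1,|z|_v^d\}\]
(which follows from a Bezout-type identity realising a suitable power of $R$ as a $\Z[z]$-linear combination of $F_1,F_2$) gives $c_v=|R|_v$. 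Finally, the product formula $\prod_v|R|_v=1$ gives $\sum_{v\neq\infty}\log|R|_v=-\log|R|$, so the total contribution is $\log D-\log|R|=-\log(|R|/D)$, which closes the argument.

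\textbf{Main obstacle.} The most delicate step is the non-archimedean resultant bound, especially when $\deg f_1\neq\deg f_2$. One must homogenise to degree $d$, relate $\mathrm{Res}(F_1,F_2)$ to the one-variable resultant $R=\mathrm{Res}(f_1,f_2)$ (they differ at most by a power of a leading coefficient, which causes no harm since the leading coefficient divides $R$ in $\Z$), and track the $v$-adic sizes of the coefficients produced by the Bezout identity to verify that no extra factor degrades the inequality. Once this local estimate is in place, everything else in the argument is formal.
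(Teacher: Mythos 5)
The paper offers no proof of this lemma at all --- it simply cites \cite[Lemma 2.1]{BCHKW14} --- so there is nothing internal to compare your argument against. Your outline (positivity of $D$ by compactness, a telescoping decomposition of $\hat h - h$ place by place, $c_\infty$ coming from $D$, $c_v = |R|_v$ at finite places via a Bezout identity, and the product formula tying everything together) is indeed the standard route and essentially the one taken in the cited source. The compactness argument for $D>0$ and the final reduction to the case $i=0$ via $\hat h_f(f^i(x))=d^i\hat h_f(x)$ are both correct.

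However, there is a genuine gap in the place-by-place step. You set up the telescoping with the \emph{affine} local canonical height $\hat h_{v,f}(z)=\lim d^{-n}h_v(f^n(z))$ and $h_v(z)=\log\max\{1,|z|_v\}$, and then assert that the inequality $h_v(f(z))-d\,h_v(z)\geq\log c_v$ holds with $c_\infty=D$. But for $f=f_1/f_2$ one has
\[
h_v(f(z))-d\,h_v(z)=\log\frac{\max\{|f_1(z)|_v,|f_2(z)|_v\}}{|f_2(z)|_v\,\max\{1,|z|_v\}^d},
\]
which carries the extra factor $|f_2(z)|_v$ in the denominator; when $f_2$ is non-constant (or even when $f_2=b$ with $|b|_v\neq 1$) the definition of $D$ does \emph{not} bound this quantity below by $\log D$, and for non-constant $f_2$ the required affine inequality is in fact false (send $|z|\to\infty$). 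The correct object to telescope is the homogeneous escape rate $G_v(A,B)=\lim_n d^{-n}\log\|\Phi^n(A,B)\|_v$ where $\Phi=(F_1,F_2)$ and $(A,B)$ is a coprime integer lift of $x$: then $\|\Phi(A,B)\|_v\geq c_v\|(A,B)\|_v^d$ with $c_\infty=D$ (by definition of $D$, since $F_i(A,B)=B^d f_i(A/B)$) and $c_v=|R|_v$ (by your Bezout identity), and one uses the identities $\sum_v G_v(A,B)=\hat h_f(x)$ and $\sum_v\log\|(A,B)\|_v=h(x)$. Your write-up conflates the affine and projective heights; as written the archimedean estimate does not follow.

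A second point, which you flag but do not resolve: the Bezout/resultant argument naturally produces the resultant of the \emph{homogenizations} $F_1,F_2$ to common degree $d$, which differs from $\mathrm{Res}(f_1,f_2)$ by a power of a leading coefficient when $\deg f_1\neq\deg f_2$; that discrepancy goes in the unfavorable direction (it \emph{weakens} the bound), so the claim that it ``causes no harm'' is not justified. This is not really a defect of your proof so much as an imprecision inherited from the paper's statement: in \cite{BCHKW14} the quantity $R$ is explicitly the resultant of the degree-$d$ homogenizations, and that is what your argument (correctly carried out via the escape rate) actually proves. You should make that the definition of $R$ rather than trying to massage the univariate resultant into the estimate.
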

	
\iffalse \begin{prop}\label{prop2}
Let $f(z)=z^d+z^e+c \in \Q[z]$ be a polynomial of degree $d> e \geq 2$ with $c \in \Q \backslash \Z$ and $|c| >2$. Suppose $n \in \mathcal{Z}(f,0)$ then $n \leq 4$.
\end{prop} \fi
\begin{proof}[Proof of Theorem \ref{thm1.3-Zsig}:]
Suppose that $f(z)=z^d+z^e+\frac{a}{b}$ where $c=\frac{a}{b} \in \Q$ for $a, b$ relatively prime integers and $|c| \geq 1$. We rewrite $f(z)$ as 
\[f(z)=\frac{f_1(z)}{f_2(z)}, \;\;\mbox{where $f_1(z)=bz^d+bz^e+a \in \Z[z]$ and $f_2(z)=b \in \Z[z]$}.\] The resultant of $f_1$ and $f_2$ is given by $R=\mbox{Res}(f_1, f_2)=b^d$. Now we will compute the lower bound for $D$ defined in \eqref{e16-Zsig}. Let $s:= \max \{2,|c|\}$. Then for $|t| \leq s$, we have $\max \{1,|t|^d\} \leq s^d$. Since $b \geq 1$, for $t \in \R \cup \{\infty\}$, 
\begin{equation}\label{e18-Zsig}
\frac{\max \{|f_1(t)|,|f_2(t)|\}}{\max \{|t|^d,1\}}\geq \frac{|f_2(t)|}{s^d} = \frac{b}{s^d} \geq \frac{1}{s^d}.
\end{equation}
Again for $|t| > s \geq 2$, we obtain
	% = b\left|1+\frac{1}{t} + \frac{c}{t^d}\right|

\begin{align}\label{e19-Zsig}
\begin{split}
\frac{\max \{|f_1(t)|,|f_2(t)|\}}{\max \{|t|^d,1\}} &\geq \frac{|f_1(t)|}{|t|^d} \geq b \left( 1-\frac{1}{|t|^{d-e}} - \frac{|c|}{|t|^d} \right) \\
&> \left( 1-\frac{1}{2}-\frac{1}{2^{d-1}} \right) \geq \frac{1}{4} \geq \frac{1}{s^d}
\end{split}
\end{align}
since $|a|>b > 1, |t|> s \geq 2$ and $d \geq 3$. From \eqref{e18-Zsig} and \eqref{e19-Zsig}, we get $D\geq \frac{1}{s^d}$. Substituting the lower bound for $D$ and the value of resultant $R$ in \eqref{e17-Zsig}, for any integer $i \geq 0$, $x \in \mathbb{P}^1(\Q)$ and $|c| \geq 2$, we obtain 
\begin{align*}
\hat{h}_{f}(x) &\geq d^{-i} \left[ h(f^i(x)) - \frac{1}{d-1} \log b^d + \frac{1}{d-1} \log \left( \frac{1}{s^d} \right)\right] \\
&= d^{-i} \left[ h(f^i(x)) - \frac{d}{d-1} \log |a| \right].
\end{align*}
Therefore, 
\begin{equation} \label{e20-Zsig}
\hat{h}_{f}(x) \geq d^{-i} \left[ h(f^i(x)) - \frac{d}{d-1} h(c)\right].
\end{equation}
%Since $c=a/b$ and $|c|>2$, we have $|a| > 2b$. Then, f
For any $z \in \R$ with $|z| \geq |c| > 2$, 
	\begin{align*}
		|f(z)| &= |z^d + z^e + c| \geq |z|^d - |z|^e - |c| \geq |z|^{d-1} - |z|^e + |z|^{d-2} - |c| + |z|^{d-2} \geq |z|^{d-2} 
	\end{align*}
where the last inequality holds since $|z| \geq |c| >2$ and the fact that $d > e \geq 2$. This implies that $h(f(c)) \geq (d-2)h(c)$ and hence for $d\geq 5$, we have
\begin{equation}\label{e21-Zsig}
(d-1)h(f(c))-dh(c) \geq ((d-1)(d-2)-d) h(c) \geq dh(c).
\end{equation}
For $d=4$, the above inequality implies that 
\begin{equation}\label{e22-Zsig}
(d-1)h(f(c))-dh(c) \geq 2h(c).
\end{equation}
%Let $c= a/b$, where $a$ and $b$ are coprime integers with $b > 0$. 
For $d=3$, one can notice that $\mbox{sgn}(c^d)=\mbox{sgn}(c)$, and hence  \[|f(c)| \geq |c|^3 - |c|^2 \geq |c|^2.\] Using similar argument as in \eqref{e21-Zsig}, for $d=3$ we obtain that 
 \begin{equation}\label{e23-Zsig}
 (d-1)h(f(c))-dh(c) \geq h(c).
 \end{equation} 
 Setting $i=1$ and $|c|>2$ in \eqref{e20-Zsig}, then from \eqref{e21-Zsig}, \eqref{e22-Zsig} and \eqref{e23-Zsig}, we get the lower bound for $\hat{h}_f(c)$ 
	\begin{equation*}
		(d-1)\hat{h}_f(c) \geq \begin{cases}
			h(c) & {\rm if\ } d \geq 5, \\
			\frac{1}{3}h(c) & {\rm if\ } d=4,3.
		\end{cases}
	\end{equation*}
Using the computations from Remark \ref{rem2.8-Zsig} and Lemma \ref{lem2.7-Zsig}, we obtain that 
\begin{equation} \label{e24-Zsig}
C_{v} = \begin{cases}
			|b^{-1}|_{v} & {\rm if\ } v \mbox{ is nonarchimedean}, \\
			2 + |c| & {\rm if\ } v \mbox{ is archimedean}.
		\end{cases}
\end{equation}
Hence, the constant $C$ in Theorem \ref{thm1.1-Zsig} for $f(z)= z^d+z^e+c$ can be taken as
 \begin{equation}\label{e25-Zsig}
 C=\log 2 + h(c).
 \end{equation} 
%Note that $|f(c)| \geq \frac{1}{2}|c|^{d-1} > 2^{d-2} > 1$ as $d \geq 3$.
We would like to point out that \eqref{e3-Zsig} is satisfied for $d > e+1 \geq 3$. For the case $d=e+1$, recall that \eqref{e3-Zsig} was only needed to show that $|f^n(c)| \geq 1$ for all $n \in \N$, which can easily established using the relation $|f(z)| \geq |z|^{d-2}$, for all $|z| \geq |c| >2$. So, for $d\geq 3$ and $|c|>2$,  if $\ n \in \mathcal{Z}(f,0)$, then from \eqref{e4-Zsig}, we infer that
	\begin{align*}
		n &\leq \frac{2}{\log d} \log\left( \frac{dC}{(d-1) \hat{h}_f(c)} \right)+2  \leq \frac{2}{\log d} \log\left( \frac{3d(\log 2+ h(c))}{h(c)} \right)+2 \\
		&\leq \frac{2}{\log d} \log\left( 3d\left( \frac{\log 2}{\log 5}+1 \right) \right)+2 < 7,
	\end{align*}
	where the last inequality follows from the fact that $h(c) \geq \log 5$ which is clearly true as $|c|>2$ and $c \in \Q \backslash \Z$. This completes the proof of Theorem \ref{thm1.3-Zsig}.
\end{proof}	

\section{Few cases on $|c| <2$} \label{sec5-Zsig}

In this section, we establish the upper bound on Zsigmondy set of polynomial $f(z) = z^d + z^e +c \in \Q[z]$ when $|c|<2$. In this, we are only able to establish the upper bound on $\mathcal{Z}(f,0)$ under certain cases, that is, 
\begin{enumerate}[label=(\alph*)]
	\item $c \in (0,2)$,
	\item $c \in (-1,0)$ and $d$ is odd,
	\item $c \in (-2,-1)$ and either $d$ is odd or $e$ is even.
\end{enumerate}
The case (a) is proved in Propositions \ref{prop5.1-Zsig} and \ref{prop5.2-Zsig}, the case (b) and (c) are proved in Propositions \ref{prop5.3-Zsig} and \ref{prop5.4-Zsig}, respectively. 
%for $c \in (0,2)$, $c \in (-1,0)$ and $d$ is odd, $c \in (-2,-1)$ and either $d$ is odd or $e$ is even. 
For the remaining cases, one can use the ideas of Ren \cite{Ren2021} to bound the cardinality of $ \mathcal{Z}(f,0)$.
\begin{prop}\label{prop5.1-Zsig}
Let $f(z)=z^d+z^e+c \in \Q[z]$ be a polynomial of degree $d \geq 3$ with $c \in \Q$ and $1<c<2$. Then $n \leq 7$, whenever $n \in \mathcal{Z}(f,0)$.%For $n \in \mathcal{Z}(f,0)$, we have $n \leq 7$.
\end{prop}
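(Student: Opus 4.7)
The plan is to apply Theorem~\ref{thm1.1-Zsig} via Remark~\ref{rem3.2-Zsig}, following the same blueprint used to prove Theorem~\ref{thm1.3-Zsig}. Write $c=a/b$ in lowest terms; since $1 < c < 2$ and $c \notin \Z$, one has $b \geq 2$, $a \geq 3$, and in particular $h(c)=\log a \geq \log 3$. To bypass the technical condition~\eqref{e3-Zsig}, I would first observe that the positivity of all coefficients of $f(z)=z^d+z^e+c$ together with $c>1$ yields $f(z) > z^d > z > 1$ for every $z>1$; induction then gives $f^n(c) > 1$ for all $n\geq 1$, so by Remark~\ref{rem3.2-Zsig} Theorem~\ref{thm1.1-Zsig} applies without checking~\eqref{e3-Zsig}.

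Next I would compute the constant $C$ of Remark~\ref{rem2.9-Zsig} via Lemma~\ref{lem2.7-Zsig} and Remark~\ref{rem2.8-Zsig}, exactly as in the proof of Theorem~\ref{thm1.3-Zsig}: the archimedean place contributes $C_\infty = 2+c$, and each prime $p$ contributes $C_p = \max\{1,|c|_p\}=|b^{-1}|_p$. Summing and using the product formula gives $C = \log(2b+a)$. The constraint $a < 2b$ (i.e.\ $|c|<2$) yields the clean bound $C \leq \log 3 + h(c)$.

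The main analytic input is a lower bound for $\hat{h}_f(c)$, obtained by invoking Lemma~\ref{lem4.1-Zsig} with $f_1(z)=bz^d+bz^e+a$ and $f_2(z)=b$, so that $R=b^d$. The estimate of $D$ splits into two regimes: for $|t|\leq 2$ one has immediately $|f_2(t)|/\max(|t|^d,1)\geq 1/2^d$; for $|t|>2$, the bound $a<2b$ gives
\[
\frac{|f_1(t)|}{|t|^d} \;\geq\; b\!\left(1 - 2^{-(d-e)} - 2^{-(d-1)}\right) \;\geq\; \frac{b}{4},
\]
so $D \geq 2^{-d}$. Observing further that $f(c) = (a^d + a^e b^{d-e} + a b^{d-1})/b^d$ is in lowest terms with numerator at least $a^d$, hence $h(f(c)) \geq d\,h(c)$, and applying \eqref{e17-Zsig} with $i=1$ produces
\[
(d-1)\hat{h}_f(c) \;\geq\; (d-2)\,h(c) - \log 2.
\]

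Finally, substituting these two estimates into~\eqref{e4-Zsig} yields, for any $n \in \mathcal{Z}(f,0)$,
\[
n \;\leq\; \frac{2}{\log d}\log\!\left(\frac{d(\log 3 + h(c))}{(d-2)h(c)-\log 2}\right) + 2.
\]
A short calculation shows the right-hand side is decreasing in both $h(c)$ (once $h(c) > \log 2/(d-2)$) and $d$ (for $d\geq 3$, $h(c)\geq \log 3$), so the extremal case is $d=3$ and $c=3/2$ (where $h(c)=\log 3$ is minimal); a direct numerical evaluation there produces $n \leq 7$. The main obstacle is that for $d=3$ the denominator $(d-2)h(c)-\log 2$ is barely positive at the smallest admissible $c$, so the bound is tight precisely at $c=3/2$; the hypothesis $c>1$ (forcing $a\geq 3$, hence $h(c) \geq \log 3 > \log 2$) is what keeps this denominator bounded away from zero and dictates the final bound~$7$.
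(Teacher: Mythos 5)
Your proposal is correct and follows the same blueprint as the paper (apply Theorem~\ref{thm1.1-Zsig} together with Lemma~\ref{lem4.1-Zsig} and Remark~\ref{rem3.2-Zsig}), but the two arguments part ways at the crucial lower bound for $\hat{h}_f(c)$ and at the choice of $C$, and it is worth recording the comparison. The paper applies Lemma~\ref{lem4.1-Zsig} with $i=2$, using the observation $f(c)=c(c^{d-1}+c^{e-1}+1)>2c$ to get $h(f^2(c))\geq d\,h(f(c))\geq d\,h(2c)$ and hence $(d-1)\hat{h}_f(c)\geq d^{-1}(d-2)h(2c)$; you instead take $i=1$ and read off $h(f(c))\geq d\,h(c)$ directly from the numerator of $f(c)$, landing at $(d-1)\hat{h}_f(c)\geq (d-2)h(c)-\log 2$. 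Your estimate is weaker (for $d=3$, $c=3/2$ it gives roughly $0.41$ versus the paper's $0.60$), which is why your final bound $n\leq 7.08$ scrapes in just under $8$ rather than being comfortably bounded as in the paper; the argument works, but with essentially no slack left, so one should double-check the numerics at $d=3,\ a=3$. On the other hand, your computation of the constant is actually the more careful one: you correctly obtain $C=\log(2b+a)\leq \log 3+h(c)$, whereas the paper's stated $C=\log 2+h(c)=\log(2a)$ is smaller than $\log(2b+a)$ precisely in the regime $a<2b$ (i.e. $c<2$) considered here, so it is not a valid choice of $C$ in Remark~\ref{rem2.9-Zsig}; the paper's later inequality $\frac{\log 2+h(c)}{h(2c)}\leq 2$ has enough slack that the conclusion survives, but you were right to flag and repair this. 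One small slip in your write-up: the bound $2b+a\leq 3a$ follows from $b\leq a$ (that is, $c>1$), not from $a<2b$ as you state.
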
	
\begin{proof}
We will proceed as in the proof of Theorem \ref{thm1.3-Zsig}. In this case, we have $s \leq 2c$. Hence, for any integer $i \geq 0$, $x \in \Q$ and $1<c<2$, we have
\begin{align*}
\hat{h}_{f}(x) &\geq d^{-i} \left[ h(f^i(x)) - \frac{1}{d-1} \log |b^d| + \frac{1}{d-1} \log \left( \frac{1}{s^d} \right)\right]\\
& \geq d^{-i} \left[ h(f^i(x)) - \frac{d}{d-1} \log (2a) \right]. 
\end{align*}
Thus, 
\begin{equation}\label{e26-Zsig}
\hat{h}_{f}(x) \geq d^{-i} \left[ h(f^i(x)) - \frac{d}{d-1} h(2c) \right].
\end{equation}
For $1<c<2$, we get 
	\begin{align*}
		f(c) = c^d+c^e+c = c(c^{d-1}+c^{e-1}+1) > 2c >2 
	\end{align*}
and this implies 
\begin{equation}\label{e27-Zsig}
h(f^2(c)) \geq dh(f(c)) \geq dh(2c).
\end{equation}
Multiplying  $(d-1)$ on both sides of \eqref{e27-Zsig} and then simplifying, we get
\begin{equation}\label{e28-Zsig}
(d-1)h(f^2(c))-dh(2c) \geq (d(d-1)-d) h(2c) \geq d(d-2)h(2c).
\end{equation}
Then from \eqref{e26-Zsig} and \eqref{e28-Zsig}, we deduce that
\begin{equation*}
(d-1)\hat{h}_f(c) \geq d^{-1}(d-2)h(2c).
\end{equation*}
Note that $f^n(c) > 1$ as $c>1$. Also, the constant $C$ in Theorem \ref{thm1.1-Zsig} for $f(z)= z^d+z^e+c$ can be taken as $C=\log 2+h(c)$. If $\ n \in \mathcal{Z}(f,0)$, then from \eqref{e4-Zsig}, we have
	\begin{align*}
		&n \leq \frac{2}{\log d} \log\left( \frac{dC}{(d-1) \hat{h}_f(c)} \right)+2 \\
		& \leq \frac{2}{\log d} \log\left( \frac{d^2(\log 2+ h(c))}{(d-2)h(2c)} \right)+2 \\
		&\leq \frac{2}{\log d} \log \left( \frac{2d^2}{d-2} \right)+2 \leq 2 \frac{\log 6d}{\log d} +2 < 8.
	\end{align*}
%{Note that for $c= \frac{a}{b} \in (1,2)$ we have $h(c)=\log a$. Further using $2c=\frac{2a}{b} \geq 2$, we have $h(2c)= h\left(\frac{2a}{b}\right) \geq \log a = h(c)$; hence we obtain $$ \frac{\log 3 + h(c)}{h(2c)} \leq \frac{\log 3}{\log a} + 1 \leq 2 $$}
\noindent This completes the proof.
\end{proof}

\begin{prop}\label{prop5.2-Zsig}
Let $f(z)=z^d+z^e+c \in \Q[z]$ be a polynomial of degree $d >e\geq 2$ with $c \in \Q $ and $0<c<1$. Then $\mathcal{Z}(f,0) = \emptyset$.
\end{prop}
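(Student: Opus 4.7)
The plan is to show that the Zsigmondy-style inequality of Corollary \ref{cor2.3-Zsig} fails for every $n \ge 2$, which forces $\mathcal{Z}(f,0) \subseteq \{1\}$, while $n = 1$ is trivial since $A_1 = a$ is its own primitive prime divisor. Write $c = a/b$ in lowest terms with $0 < a < b$. A straightforward induction on $n$ using the recursion $f^{n+1}(0) = (A_n/B_n)^d + (A_n/B_n)^e + a/b$ yields the explicit formulas
\[
B_n = b^{d^{n-1}}, \qquad A_{n+1} = A_n^d + b^{d^{n-1}(d-e)} A_n^e + a\,b^{d^n-1},
\]
together with $\gcd(A_n,b) = 1$. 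The point is that for each prime $p \mid b$ one has $|c|_p > 1$, and the ultrametric inequality gives $|f(z)|_p = |c|_p^d$ whenever $|z|_p \le 1$; iterating yields $|f^n(0)|_p = |c|_p^{d^{n-1}}$, which both pins down $B_n$ and rules out spurious cancellation at every step. The key consequence is the clean identity $|A_n| = b^{d^{n-1}}\, f^n(0)$.

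Next I would record two real-analytic facts: since $f(x) - x = x^e(1 + x^{d-e}) > 0$ for $x > 0$, the sequence $\{f^n(0)\}_{n\ge 1}$ is strictly positive and strictly increasing, so $\log f^{n/q}(0) \le \log f^n(0)$ for every prime $q \mid n$. Substituting $|A_m| = b^{d^{m-1}} f^m(0)$ into Corollary \ref{cor2.3-Zsig} and rearranging, the assumption $n \in \mathcal{Z}(f,0)$ becomes
\[
\bigl(d^{n-1} - S_n\bigr)\log b \;\le\; \sum_{q\mid n} \log f^{n/q}(0) - \log f^n(0), \quad S_n := \sum_{\substack{q\mid n \\ q\text{ prime}}} d^{n/q-1}.
\]
Since $d \geq 3$ and $n \ge 2$, the crude bound $S_n \le \omega(n)\, d^{n/2-1}$ shows the left-hand side is at least $(d-1)\log b \ge 2\log b > 0$.

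To finish, I would bound the right-hand side in two cases. If $f^n(0) < 1$, every $f^{n/q}(0) < 1$ as well, so each $\log f^{n/q}(0) < 0$ and the right side is at most $(\omega(n) - 1)\log f^n(0) \le 0$. If $f^n(0) \ge 1$, iterating the estimate $f(x) \ge x^d$ (valid for $x \ge 1$) along the tail of the orbit yields $\log f^{n/q}(0) \le d^{-(n - n/q)}\log f^n(0) \le d^{-n/2}\log f^n(0)$ when $f^{n/q}(0) \ge 1$, and $\log f^{n/q}(0) < 0 \le d^{-n/2}\log f^n(0)$ otherwise, so the right-hand side is at most $(\omega(n)\,d^{-n/2} - 1)\log f^n(0) \le 0$ because $\omega(n) < d^{n/2}$ for $n \ge 2$, $d \ge 3$. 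In either case the right side is nonpositive while the left is strictly positive, contradicting the assumption, and thus $n \notin \mathcal{Z}(f,0)$. The main obstacle is the bookkeeping of the first paragraph: verifying that no unexpected $p$-adic cancellation occurs in the iterated rational function so that the identity $|A_n| = b^{d^{n-1}} f^n(0)$ genuinely holds on the nose; once that is secured, the uniform lower bound $\log b \ge \log 2$ on the left side easily dominates the (at best mildly positive) right side in both regimes.
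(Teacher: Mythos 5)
Your proof is correct and follows the same basic strategy as the paper's: exploit the exact denominator $B_n = b^{d^{n-1}}$ to split $\log|A_m| = d^{m-1}\log b + \log f^m(0)$, substitute into Corollary \ref{cor2.3-Zsig}, and show the $\log b$ contributions dominate via $\omega(n) < d^{n/2}$. Where you differ is in the treatment of the archimedean part $\log f^m(0)$. The paper derives an explicit two-sided bound $c \le f^m(0) \le \alpha^{(d^{m-1}-1)/(d-1)}c$ with $\alpha = c^2+c+1 < 3$, then compares $\log\alpha$ against $\log b$ (using $\alpha < 3 < b^2$) to absorb the error into the $\log b$ savings. You instead observe that $\{f^m(0)\}$ is strictly increasing and then case-split on $f^n(0) \lessgtr 1$: in the small regime all logs are negative and the telescoping makes the right-hand side nonpositive outright; in the large regime you propagate $f(x)\ge x^d$ backward to show $\log f^{n/q}(0) \le d^{-n/2}\log f^n(0)$, again forcing the right-hand side $\le 0$. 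Your version buys a cleaner conclusion (right side nonpositive, left side strictly positive, no comparison between $\alpha$ and $b$ needed) at the cost of the $f^n(0)\ge 1$ case analysis; the paper's version keeps everything in one computation at the cost of the $\alpha < b^2$ bookkeeping. One small caveat applying to both arguments: $n=1$ is not addressed, and if the numerator $a=1$ then $A_1 = 1$ has no prime divisors at all, so $1 \in \mathcal{Z}(f,0)$ under the paper's literal definition; your remark that $A_1 = a$ ``is its own primitive prime divisor'' is not quite right in that corner case, but this is a defect shared with the paper's statement rather than a flaw introduced by you.
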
	

\begin{proof}
Observe that for any $z>0$, $f(z)\geq z$. Using this observation and induction on $n$, we can conclude that $|f^n(c)| \geq c$ for all $n \geq 1$ since $c>0$. Also, 
\[|f(c)| = c^d+c^{e}+c \leq c(c^2+c+1)= \alpha c\] where $\alpha=c^2+c+1$. Clearly $1< \alpha <3.$ A simple induction will imply that 
\[|f^n(c)| \leq \alpha^{\frac{d^n-1}{d-1}} c.\] 
Hence, for $0<c<1$, we obtain that
\begin{equation}\label{e29-Zsig}
c \leq |f^n(0)| \leq \alpha^{\frac{d^{n-1}-1}{d-1}} c.
\end{equation}
From \eqref{e5-Zsig}, we have $f^n(0) = \frac{A_n}{B_n}$ with $B_n = b^{d^{n-1}}$. If $n\in \mathcal{Z}(f, 0)$, then from Corollary \ref{cor2.3-Zsig}, we obtain
\[\log |f^{n}(0)|+ d^{n-1} \log b \leq \sum_{q \mid n} \left( \log |f^{\frac{n}{q}}(0)|+d^{\frac{n}{q}-1} \log b \right)\] 
where the sum on the right is taken over distinct primes $q$ dividing $n$. Multiplying $d$ and using \eqref{e29-Zsig}, we have %the above inequality is true if
\[d\log c + d^n \log b \leq  \sum_{q \mid n} \left[ \frac{d^{\frac{n}{q}}-d}{d-1} \log \alpha  + d \log c+  d^{\frac{n}{q}} \log b\right],\]
rearranging above inequality, we get
\[d(1-\omega (n)) \log c + [d^n -s_d(n)]\log b \leq  \frac{\log \alpha}{d-1}  [s_d(n) - d \omega(n)],\]
where $s_d(n):= \sum_{q\mid n}d^{\frac{n}{q}}$. Since $d(1-\omega (n)) \log c$ is always non-negative, we have%it is enough to have
\[[d^n -s_d(n)]\log b \leq  \frac{\log \alpha}{d-1}  [s_d(n) - d \omega(n)]\leq \frac{\log \alpha}{d-1}  s_d(n).\]

\noindent As $\alpha < 3 < b^2 $, we get
\[d^n-s_d(n)\leq \frac{2}{d-1}  s_d(n)\] and hence using $s_d(n)\leq d^{n/2}\omega(n)$ and \eqref{e-Zsig}, we get
\[d^{n} \leq \frac{d+1}{d-1} s_d(n) \leq 2 d^{n/2} \omega(n) < d^n,\]
for any $d \geq 3$ and $n \geq 2$. This is a contradiction.
\end{proof}
{\iffalse 
\begin{equation}\label{e30-Zsig}
	d^n-\frac{d+1}{d-1}s_d(n) \leq 0.
\end{equation}
Since , inequality \eqref{e30-Zsig} becomes
\[d^{n/2}-\frac{d+1}{d-1} \log_2(n)\leq 0,\] which is impossible for any $d\geq 3$ and $n\geq 2$. \fi}
%\noindent Next, we consider the case $-1<c<0$ with $d$ is odd.

\begin{prop}\label{prop5.3-Zsig}
Let $f(z)=z^d+z^e+c \in \Q[z]$ be a polynomial of odd degree $d >e\geq 2$ with $c \in \Q $. Suppose $-1<c<0$, then $\mathcal{Z}(f,0) = \emptyset$.
\end{prop}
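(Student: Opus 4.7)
The plan is to first reduce to Proposition \ref{prop5.2-Zsig} whenever possible, and to adapt its proof in the remaining case. The reduction uses the involution $g(z) := -f(-z)$. Because $d$ is odd, a short computation yields $g(z) = z^d - (-1)^e z^e + |c|$, and an easy induction shows $g^n(0) = -f^n(0)$ for all $n$, so $\mathcal{Z}(g,0) = \mathcal{Z}(f,0)$. When $e$ is odd this gives $g(z) = z^d + z^e + |c|$ with $|c| \in (0,1)$, and Proposition \ref{prop5.2-Zsig} applies directly. The substantive case, which the remainder of the plan addresses, is $e$ even, where $g(z) = z^d - z^e + |c|$.

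For this case I would first confine the orbit. For $z \in [0, |c|]$ the inequality $z^d \leq z^e$ (from $d > e$ and $|c| < 1$) gives $g(z) = |c| + (z^d - z^e) \in [|c| - |c|^e,\, |c|]$. Writing $L := |c|(1 - |c|^{e-1})$, this shows $g([0, |c|]) \subseteq [L, |c|]$, and since $g(0) = |c|$, a trivial induction yields $L \leq g^n(0) \leq |c|$ for every $n \geq 1$. As in Proposition \ref{prop5.2-Zsig}, an induction using $\gcd(a,b)=1$ and the congruence $A_n \equiv A_{n-1}^d \pmod{b}$ shows $g^n(0) = A_n/b^{d^{n-1}}$ in lowest terms.

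Supposing $n \geq 2$ lies in $\mathcal{Z}(g,0)$, I would feed these bounds into Corollary \ref{cor2.3-Zsig}. After substituting $\log|A_n| = \log g^n(0) + d^{n-1}\log b$, applying $g^n(0) \geq L$ on the left and $g^{n/q}(0) \leq |c|$ on the right, multiplying by $d$, and rearranging, one arrives at
\[
(d^n - s_d(n)) \log b \;\leq\; d(\omega(n)-1)\log|c| + d\log\!\left(\frac{1}{1 - |c|^{e-1}}\right),
\]
with $s_d(n) := \sum_{q \mid n} d^{n/q}$. Since $\omega(n) \geq 1$ and $\log|c| < 0$, the first term on the right is nonpositive and can be discarded.

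The main obstacle is to bound $1/(1 - |c|^{e-1})$ sharply enough. The key observation is that $|c| = |a|/b \leq (b-1)/b = 1 - 1/b$, together with $e - 1 \geq 1$ and $0 < 1 - 1/b < 1$, gives $|c|^{e-1} \leq (1 - 1/b)^{e-1} \leq 1 - 1/b$, hence $1 - |c|^{e-1} \geq 1/b$ and the right side is at most $d\log b$. On the other hand, for $n \geq 2$ and $d \geq 3$ the left side is at least $(d^2 - d)\log b = d(d-1)\log b \geq 2d\log b$, which strictly exceeds $d\log b$ and yields the required contradiction for every $n \geq 2$.
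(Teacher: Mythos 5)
Your proof is correct and follows essentially the same line as the paper. The paper also splits into $e$ odd versus $e$ even: for $e$ odd it states the same two-sided bound $|c|\le|f^n(0)|\le\alpha^{(d^{n-1}-1)/(d-1)}|c|$ and invokes the argument of Proposition~\ref{prop5.2-Zsig}; for $e$ even it derives, exactly as you do, that the orbit stays in $(-1,0)$ with $|c|(1-|c|^{e-1})\le|f^n(c)|\le|c|$, then uses $1-|c|^{e-1}\ge b^{-1}$ to bound the right-hand side of Corollary~\ref{cor2.3-Zsig} by $d\log b$, reaching the same contradiction via \eqref{e-Zsig}. The one genuine (minor) improvement in your version is the involution $g(z)=-f(-z)$, which turns the paper's informal ``proceed as in Proposition~\ref{prop5.2-Zsig}'' in the $e$-odd case into a clean, literal reduction to that proposition via $\mathcal{Z}(g,0)=\mathcal{Z}(f,0)$. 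One small imprecision: the claim that $d^n-s_d(n)\ge d^2-d$ for all $n\ge 2$ deserves a word of justification; the cleaner route (and what the paper effectively uses) is $d^n-s_d(n)\ge d^{n/2}(d^{n/2}-\omega(n))>2d^{n/2}\ge 2d>d$ via \eqref{e-Zsig}, which already finishes the contradiction.
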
	

\begin{proof}
{\sc Case I:} ($e$ is odd). In this case,  we have the following inequality
\begin{equation*}
|c| \leq |f^n(0)| \leq | \alpha|^{\frac{d^{n-1}-1}{d-1}} |c|,
\end{equation*}
where $\alpha = c^2+|c|+1$. If $n\in \mathcal{Z}(f,0)$. Proceeding as in the proof of Proposition \ref{prop5.2-Zsig}, we get a contradiction.

\noindent {\sc Case II:} ($e$ is even). Since $-1<c<0,\ d$ is odd and $e$ is even, we have $c^d+c^{e}$ is positive and has absolute value less than $|c|$. So, we conclude that $|f(c)| = |c^d+c^{e}+c| \leq |c|$ and $f(c) < 0$. Suppose that $|f^n(c)| \leq |c| < 1$ and $f^n(c) <0$. Then
\[|f^{n+1}(c)|= |f(f^{n}(c))| = |(f^{n}(c))^d+ (f^{n}(c))^e+c|\leq |c|.\]
As $-1 < c < f^n(0) < 0$, clearly we have $f^{n+1}(c)<0$. Thus induction hypothesis yields
\begin{equation*}
-1<c \leq f^n(c) <0, \quad \text{for all } n \in \N.
\end{equation*}
Note that $d-e$ is odd. Thus, from above equation, we get 
\[|1+(f^n(c))^{d-e}|<1 \ \mbox{ and } \ |f^n(c)| \leq |c| \ \mbox{ for all } n \geq 0.\]
Now for $n\geq 1$, we have
\begin{align*}
|f^n(c)| &\geq |c|-|f^{n-1}(c)|^{e} |1+(f^{n-1}(c))^{d-e}| \\
&> |c|-|c|^{e}\geq |c|(1-|c|^{e-1}).
\end{align*}
Thus, 
\begin{equation*}
|c|(1-|c|^{e-1})\leq |f^n(c)| \leq |c|.
\end{equation*}
For $-1 < c = \frac{a}{b} < 0$, note that $|c|^{e-1} \leq |c|=\frac{|a|}{b}$. So, 
\[ (1-|c|^{e-1}) \geq \frac{b-|a|}{b} \geq b^{-1}.\]
If $n\in \mathcal{Z}(f, 0)$, then from Corollary \ref{cor2.3-Zsig}, we obtain
\[\log (|c|(1-|c|^{e-1}))+ d^{n-1} \log b \leq \omega(n) \log |c|+ \log b\sum_{q \mid n}d^{\frac{n}{q}-1}.\]
Multiplying by $d$ and rearranging, we have
\begin{align*}
[d^n -s_d(n)]\log b &\leq  d(\omega(n)-1)\log |c|  - d\log ((1-|c|^{e-1}))\\
&\leq - d\log ((1-|c|^{e-1}))\leq d\log b.
\end{align*}
By using $s_d(n)\leq d^{n/2}\omega(n)$ and \eqref{e-Zsig}, we obtain
\[d^n \leq s_d(n) + d \leq d^{n/2} \omega(n) + d \leq d^{n/2} (\omega(n)+1) < d^n\] 
for $d \geq 3$ and $n \geq 2$. This is a contradiction.
\end{proof}

{\iffalse
\noindent {\sc Case III:} ($d$ is even and $e$ is odd). 

Following arguments from Proposition \ref{prop5.2-Zsig}, we have 
\[|f^n(c)| \leq \alpha^{\frac{d^n-1}{d-1}} |c|,\] where $\alpha = c^2+c+1$. For $n\geq 1$, we have {\color{red} problem with the inequality below}
\begin{align*}
	|f^n(c)| &\geq |c|-|f^{n-1}(c)|^{e} |1+(f^{n-1}(c))^{d-e}| \\
	&> |c|-|c|^{e}\geq |c|(1-|c|^{e-1}) \geq \min\{|c|, 1-|c|\}.
\end{align*}
Letting $-1<c=\frac{a}{b}<0$, we have 
\[ (1-|c|) \geq \frac{b-|a|}{b} \geq b^{-1}.\]
If $n\in \mathcal{Z}(f, 0)$, then by proceeding as in the above cases, we get a contradiction. 
%{\sc Case IV:} (Both $d$ and $e$ even).
\fi}

\begin{prop} \label{prop5.4-Zsig}
	Let $f(z) = z^d + z^e + c \in \Q[z]$ be a polynomial of degree $d>e \geq 2$ with $c \in \Q $. Suppose $-2 < c <-1$ and $d$ is odd or $e$ is even. Then $\mathcal{Z}(f,0) = \emptyset$.
\end{prop}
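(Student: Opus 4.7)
The plan is to split into three parity subcases of $(d,e)$ and in each to mimic the template of Propositions~\ref{prop5.2-Zsig} and \ref{prop5.3-Zsig}: track the sign of the orbit, obtain two-sided multiplicative bounds on $|f^n(0)|$, and combine Corollary~\ref{cor2.3-Zsig} with \eqref{e-Zsig}. Writing $c=a/b$ in lowest terms, since $-2<c<-1$ and $c\notin\Z$ we have $b\geq 2$ and $|a|\geq b+1\geq 3$; in particular $|A_1|=|a|\geq 3$ has a (trivially primitive) prime divisor, so $1\notin\mathcal{Z}(f,0)$ and we reduce to $n\geq 2$. The three subcases are (i) $d,e$ both odd, (ii) $d$ odd and $e$ even, and (iii) $d,e$ both even (so $d\geq 4$); the excluded pair $d$ even, $e$ odd is precisely the one ruled out by the hypothesis.

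For (i), the orbit stays strictly below $-1$ and $|f(z)|=|z|^d+|z|^e+|c|$, giving $|c|^{d^{n-1}}\leq|f^n(0)|\leq 3^{(d^{n-1}-1)/(d-1)}|c|^{d^{n-1}}$. For (iii), $f(c)>0$ and the orbit enters $(1,\infty)$ after one step; using $|c|^e\geq|c|$ together with the implication $z\geq|c|^{1/e}\Rightarrow f(z)\geq z^d$, induction gives $f^n(c)\geq|c|^{d^n}$, while $f(z)\leq 2z^d$ supplies the matching upper bound. Multiplying by $B_n=b^{d^{n-1}}$ (which stays $b^{d^{n-1}}$ in lowest terms since $\gcd(a,b)=1$), feeding into Corollary~\ref{cor2.3-Zsig}, and then applying $s_d(n)\leq d^{n/2}\omega(n)$ together with \eqref{e-Zsig} collapses the resulting inequality, just as in Proposition~\ref{prop5.2-Zsig}, to $(d-1)\log|a|<\log 3$ in (i) and $(d-1)\log|a|<\log 2$ in (iii); since $d\geq 3$ (resp.~$d\geq 4$), each contradicts $|a|\geq 3$.

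Case (ii) is the delicate one. The orbit still stays below $-1$, but $|f(z)|=|z|^d-|z|^e+|c|$ grows only as $|f(z)|\geq \beta|z|^d$ with $\beta:=1-|c|^{-(d-e)}\in(0,1)$, yielding $|a|^{d^{n-1}}\beta^{(d^{n-1}-1)/(d-1)}\leq|A_n|\leq 2^{(d^{n-1}-1)/(d-1)}|a|^{d^{n-1}}$. A naive estimate $|\log\beta|\leq\log b+O(1)$ is too loose when $|a|-b=1$; the plan is to retain the exact identity
\[
|\log\beta|=(d-e)\log|a|-\log\bigl(|a|^{d-e}-b^{d-e}\bigr).
\]
Substituting into Corollary~\ref{cor2.3-Zsig} and multiplying through by $d-1$, the assumption $n\in\mathcal{Z}(f,0)$ rearranges to
\[
\bigl[(e-1)d^{n-1}+(d-e)-\tfrac{d-1}{d}s_d(n)\bigr]\log|a|+(d^{n-1}-1)\log\bigl(|a|^{d-e}-b^{d-e}\bigr)\leq\bigl(\tfrac{s_d(n)}{d}-\omega(n)\bigr)\log 2,
\]
which we then show cannot hold. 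The bracket equals $(d-1)(e-1)\geq 2$ at $n=2$ (where the right side vanishes) and grows like $(e-1)d^{n-1}$ for larger $n$, dominating $(d-1)s_d(n)/d$ via \eqref{e-Zsig}; since $|a|^{d-e}-b^{d-e}\geq|a|-b\geq 1$ the second left-hand term is non-negative, and $|a|\geq 3$ then forces the left side to exceed the right, delivering the contradiction.

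The main obstacle lies in case (ii) precisely when $d-e=1$, $e=2$, $|a|-b=1$ (i.e.\ $c=-(b+1)/b$): the margin is razor-thin, and any relaxation of the exact formula for $|\log\beta|$ destroys the contradiction. It is the $(d-1)(e-1)\log|a|$ contribution at $n=2$, together with the insistence on keeping $\log(|a|^{d-e}-b^{d-e})$ intact rather than absorbing it into a crude $\log b$, that ultimately closes the argument.
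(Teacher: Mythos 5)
Your proof is correct, but it follows a genuinely different route from the paper's in the hardest parity case, and it is worth spelling out the trade-off. The paper partitions the hypothesis into just two cases: (I) $d$ odd (with no further split on $e$) and (II) $d,e$ both even. In Case I the paper settles for the weak lower bound $|f^n(c)| \geq |c| > 1$, noting that the orbit stays in $(-\infty,-1)$, and then feeds this together with the upper bound $|f^n(c)| \leq 3^{(d^n-1)/(d-1)}|c|^{d^n}$ into Corollary~\ref{cor2.3-Zsig}, obtaining a contradiction ``as in Proposition~\ref{prop5.2-Zsig}'' for $n\geq 5$ and dispatching $n\leq 4$ by a manual check. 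The reason the weak lower bound suffices — and this is the observation you did not exploit — is that $1 < |c| < 2 \leq b$ forces $\log|c| < \log b$, so after clearing the $B_n=b^{d^{n-1}}$ factors the positive $\log|c|$ term on the left is absorbed into the $\log b$ term with room to spare. Your treatment instead refines Case I into the two subcases $e$ odd and $e$ even: when $e$ is odd you recover the clean estimate $|f(z)|\geq|z|^d$ (matching the structure of Case II), and when $e$ is even you painstakingly track the deficit $\beta=1-|c|^{-(d-e)}$, keeping the exact identity $|\log\beta|=(d-e)\log|a|-\log(|a|^{d-e}-b^{d-e})$ rather than crudely bounding it, and verify that the bracket $(e-1)d^{n-1}+(d-e)-\tfrac{d-1}{d}s_d(n)$ equals $(e-1)(d-1)$ at $n=2$ and dominates $s_d(n)/d-\omega(n)$ for $n\geq 3$ via $d^{n-1}\geq s_d(n)$ and \eqref{e-Zsig}. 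This is considerably more work than the paper's unified Case I; the $\beta$-bookkeeping is correct but, as the paper shows, unnecessary. What your route buys in return is a self-contained contradiction for every $n\geq 2$, with the $n=2$ base case handled exactly rather than by the paper's unspecified ``simple manual check'' for $n\leq 4$. Both arguments rely on the same ingredients — Corollary~\ref{cor2.3-Zsig}, the estimate $s_d(n)\leq d^{n/2}\omega(n)$, and \eqref{e-Zsig} — and both correctly track the denominator $B_n=b^{d^{n-1}}$ and the sign/lower-bound behavior of the orbit.
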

\begin{proof}
	Using simple inductive arguments, we obtain the upper bound %for $|f^n(c)|$.
	\[ |f^n(c)| \leq 3^{\frac{d^n-1}{d-1}} |c|^{d^{n}} \text{ for all } n \in \N. \]
	Now we consider different cases to get a lower bound for $|f^(c)|$. \\
	\noindent {\sc Case I:} ($d$ is odd). 
	As $c<-1$ and $d$ is odd, from
	\[ f(c)= c^d + c^e + c = -|c|( |c|^{d-1} \pm |c|^{e-1} + 1 ), \] we observe that $f(c)$ is negative and $|f(c)| \geq |c| >1$. Inductively using $f^{n-1}(c)<0$ and $|f^{n-1}(c)| \geq |c| >1$, we obtain from
	\[ f^{n}(c) = (f^{n-1}(c))^d + (f^{n-1}(c))^e + c < c <0, \]
	that $f^n(c)$ is negative and $|f^n(c)|  \geq |c| >1$ for all $n \in \N$.
	
	Thus, we may assume that $d$ is even, then $e$ is also even.
	
	\noindent {\sc Case II:} ($d$ and $e$ are even).
	Since $c<-1$ and both $d,e$ are even, from \\
	\( f(c) = c^d + c^e + c \geq c^d =|c|^d > 1 \) we observe that $f(c)$ is positive and $|f(c)| \geq |c|^d$. Inductively using $f^{n-1}(c) >0$ and $|f^{n-1}(c)| \geq |c|^{d^{n-1}} >1$, we obtain from
	\[ f^{n}(c) = (f^{n-1}(c))^d + (f^{n-1}(c))^e + c >  f^{n-1}(c)^{d} > |c|^{d^{n}},\]
	i.e., $f^n(c)$ is positive and $|f^n(c)| \geq |c|^{d^n}$ for all $n \in \N$.
	
	\noindent If $n\in \mathcal{Z}(f,0)$, then proceeding as in Proposition \ref{prop5.2-Zsig}, we get a contradiction for $d \geq 3$ and $n \geq 5$. For $n \leq 4$, simple manual check also contradicts the inequality arising from the Corollary \ref{cor2.3-Zsig}.
\end{proof}

{}

\section{Concluding Remark}
Let $f(z) = z^d + z^e +c \in \Q[z]$ be the polynomial of even degree $d$. If $c \in (-1,0)$ or $c\in (-2,-1)$ and $e$ is odd, then obtaining a non-trivial lower bound of $|f^n(c)|$ seems to be difficult. Hence, as a consequence, it is not easy to provide an explicit upper bound of $\mathcal{Z}(f,0)$ in such cases.
%We are unable to provide an upper bound for the largest element in the set $\mathcal{Z}(f,0)$ in the following cases: $-1<c<0$ with $d$ even and $-2<c<-1$ with $d$ is even and $e$ is odd. In such cases, it is difficult to provide a lower bound for $f^n(c)$.

\section*{Acknowlegdments}
This work was started when the author P.Y. visited Department of Mathematics, NIT Calicut and he thanks the  institute for their hospitality. S.L. was supported by SERB-CRG grant CRG/2023/005564 while working on this project. S.S.R. was supported by grants from National Board for Higher Mathematics (NBHM), Sanction Order No: 14053 and from Anusandhan National Research Foundation (File No.:CRG/2022/000268) while working on this project.


\begin{thebibliography}{100}
   \bibitem{Bang86} A. S. Bang, \emph{Talteoretiske undersgelser}, Tidsskr. Math. {\bf 4}(5)  (1886), 70-80; 130-137.
	
	\bibitem{BDJKRZ} R. L. Benedetto, B. Dickman, S. Joseph, B. Krause, D. Rubin and X. Zhou, \emph{Computing points of small height for cubic polynomials}, Involve, {\bf 2} (2009), 37-64. 
	
	\bibitem{BCHKW14} R. L. Benedetto, R. Chen, T. Hyde, Y. Kovacheva and C. White, \emph{Small dynamical heights for quadratic polynomials and rational functions}, Expt. Math., {\bf 23} (2014), 433-447.
	
	\bibitem{BHV01} Y. Bilu, G. Hanrot and P.M. Voutier, \emph{Existence of Primitive Divisors of Lucas and
	Lehmer Number}, J. Reine Angew. Math, {\bf 539} (2001), 75-122. 
	
	 \bibitem{CaSil93} G. S. Call and J. H. Silverman, \emph{Canonical heights on varieties with morphisms}, Compositio Math. {\bf 89} (1993), 163-205.
	
	\bibitem{Carmichael1913} R. D. Carmichael, \emph{On the numerical factors of the arithmetic forms $\alpha^n\pm \beta^n$}, Ann. of Math. {\bf 15}(1/4)
	(1913), 49-70.
	
	\bibitem{Cheng2019} T. Cheng, \emph{Primitive prime divisors for weighted homogeneous polynomial}, Bull. Math. Soc. Sci. Math. Roumanie, {\bf 62} (2019), 173-182.
	
	 \bibitem{DoHa12} K. Doerksen and A. Haensch, \emph{Primitive prime divisors in zero orbits of polynomials},  Integers {\bf 12} (3)
	(2011), 465-473.
	
	\bibitem{EMW06} G. Everest, G. Mclaren, T. Ward, \emph{Prime divisors of elliptic divisibility sequences} J. Number Theory {\bf 118}  (2006), 71-89.
	
	\bibitem{Evertse1997} J. H. Evertse, \emph{The number of algebraic numbers of given degree approximating a given algebraic number}, Analytic Number Theory, Y. Motohashi (ed.), London Math. Soc. Lecture Notes Ser. {\bf 247}, (Cambridge University Press 1998), 53-83.
	
	\bibitem{FaVo11} X. Faber and F. Voloch, \emph{On the number of places of convergence for Newton's method over number fields}, J. Th\'eor. Nombres Bordeaux {\bf 23} (2) (2011), 387-401.
	
	\bibitem{GNT13} C. Gratton,  K. Nguyen and T. J. Tucker, \emph{ABC implies primitive prime divisors in arithmetic dynamic}, Bull. Lond. Math. Soc. {\bf 45} (6) (2013), 1194-1208.
	
	\bibitem{Ingram2007} P. Ingram, \emph{Elliptic divisibility sequences over certain curves}, J. Number Theory {\bf 123}(2)  (2007), 473-486.
	
	\bibitem{Ingram2009} P. Ingram, \emph{Lower bounds on the canonical height associated to the morphism $z^d+c$.}, Monatsh Math {\bf 157}(1)  (2009), 69-89.
	
	\bibitem{Ingram2012} P. Ingram, \emph{A finiteness result for post-critically finite polynomials}, Int. Math. Res. Not. {\bf3}  (2012), 524-543.
	
	 \bibitem{InSil09} P. Ingram and J. Silverman, \emph{Primitive divisors in arithmetic dynamics}, Math. Proc. Camb. Phil. Soc. {\bf 146} (2) (2009), 289-302.
	 
	 \bibitem{Ingram2019} P. Ingram, Canonical heights and preperiodic points for certain weighted homogeneous families of polynomials, Int. Math. Res. Not. {\bf 15} (2019), 4859-4879.  
	
  	\bibitem{Krieger2013} H. Krieger, \emph{Primitive prime divisors in the critical orbit of $z^d+c$}, Int. Math. Res. Not. IMRN, {\bf 23} (2013), 5498–-5525.
    
	\bibitem{Ren2021}R. Ren, \emph{Primitive prime divisors in the critical orbits of one-parameter families of rational polynomials}, Math. Proc. Camb. Philos. Soc., {\bf 171} (3) (2021), 569-584.

    \bibitem{Rice2007} B. Rice, \emph{Primitive prime divisors in polynomial arithmetic dynamics}, Integers {\bf 7}(1) (2007), A26, 1-16.
    
     \bibitem{Schinzel1962} A. Schinzel, \emph{The intrinsic divisors of Lehmer numbers in the case of negative discriminant}, Ark. Mat. {\bf 4} (1962), 413-416.
	
    \bibitem{Schinzel1974} A. Schinzel, \emph{Primitive divisors of the expression $a^n-b^n$ in algebraic number fields}, J. Reine Angew. Math. {\bf 268/269} (1974), 27-33.
    
    \bibitem{Shokri2022} K. M. Shokri, \emph{The Zsygmondy set for zero orbit of a rigid polynomial}, Khayyam J. Math. {\bf 8} (1) (2022),  115-119.
    
    \bibitem{Silverman1988}J. H. Silverman, \emph{Weiferich's criterion and the abc-conjecture}, J. Number Theory {\bf 30} (1988),  226-237.
    
    \bibitem{Silverman2007}J. H. Silverman, \emph{The arithmetic of dynamical systems}, Graduate Texts in Mathematics, vol. 241, Springer, New York, 2007.
    
    \bibitem{SiVo09}J. H. Silverman and F. Voloch, \emph{A local-global criterion for dynamics on {$\Bbb P^1$}}, Acta Arith. {\bf 137} (2009), 285-294.
    
    \bibitem{Stewart1977} C. L. Stewart, \emph{Primitive divisors of {L}ucas and {L}ehmer sequences}, Transcendence theory: Advances and Applications (A. Baker and D.W. Masser, eds.), Academic Press, New York, (1977), 79-92.
    
    \bibitem{Voutier1998} Paul M. Voutier, \emph{Primitive divisors of {L}ucas and {L}ehmer sequences. $III$}, Math. Proc. Camb. Philos. Soc. {\bf 123}(3) (1998), 407-419.
    
   \bibitem{Zsigmondy1892} K. Zsigmondy, \emph{Zur Theorie der Potenzreste}, Monatsh. Math. Phys. {\bf 3}(1) (1892), 265-284.
\end{thebibliography}
\end{document}